\documentclass[preprint,sort&compress]{elsarticle}
\journal{Applied Numerical Mathematics}

\usepackage{amsmath}
\usepackage{amsfonts}
\usepackage{amsthm}
\usepackage{bbm}
\usepackage{hyperref}
\usepackage[capitalise]{cleveref}
\usepackage{empheq}
\usepackage{forest}
\usepackage{lineno}
\usepackage{mathtools}
\usepackage{mleftright}
\usepackage{pdflscape}
\usepackage{siunitx}
\usepackage{subcaption}

\crefname{equation}{}{}
\Crefname{equation}{Equation}{Equations}

\newtheorem{theorem}{Theorem}
\newtheorem{remark}{Remark}

\newtheorem{corollary}{Corollary}

\newenvironment{butchertableau}[2][1.15]{\def\arraystretch{#1}\arraycolsep=2.75pt\array{#2}}{\endarray}

\newcommand{\abs}[1]{\left\lvert#1\right\rvert}
\newcommand{\norm}[1]{\left\lVert#1\right\rVert}

\DeclareMathOperator{\diag}{diag}
\DeclareMathOperator{\col}{col}

\newcommand{\R}{\mathbb{R}}
\newcommand{\C}{\mathbb{C}}

\renewcommand{\Re}{\operatorname{Re}}
\newcommand{\ud}[1]{\, \mathrm{d}#1}
\newcommand{\nvar}{N}
\newcommand{\timevar}{t}
\newcommand{\ts}{\timevar_{0}}
\newcommand{\tf}{\timevar_{\mathrm{f}}}
\newcommand{\dt}{h}
\newcommand{\one}{\mathbbm{1}}
\newcommand{\order}[1]{\ensuremath{\mathcal{O}\mleft(#1\mright)}}
\newcommand{\psl}{p_{\text{SL}}}
\newcommand{\so}[1]{\gamma_{#1}}
\newcommand{\qo}[1]{\widehat{\gamma}_{#1}}

\begin{document}

\begin{frontmatter}

\title{Adaptive Diagonally Implicit Runge--Kutta Methods Devoid of Order Reduction for Semilinear ODEs}

\author{Steven B. Roberts\corref{cor}}
\ead{roberts115@llnl.gov}
\cortext[cor]{Corresponding Author}
\affiliation{
    organization={Lawrence Livermore National Laboratory},
    addressline={7000 East Ave},
    city={Livermore},
    postcode={94550},
    state={California},
    country={United States}
}

\author{Abhijit Biswas}
\ead{abhijit@iitk.ac.in}
\affiliation{
    organization={Indian Institute of Technology Kanpur},
    addressline={Kalyanpur},
    city={Kanpur},
    postcode={208016},
    state={Uttar Pradesh},
    country={India}
}

\author{David Shirokoff}
\ead{david.g.shirokoff@njit.edu}
\affiliation{
    organization={New Jersey Institute of Technology},
    addressline={154 Summit Street},
    city={Newark},
    postcode={07102},
    state={New Jersey},
    country={United States}
}

\author{Benjamin Seibold}
\ead{seibold@temple.edu}
\affiliation{
    organization={Temple University},
    addressline={1805 N. Broad Street},
    city={Philadelphia},
    postcode={19122},
    state={Pennsylvania},
    country={United States}
}

\begin{abstract}
Diagonally implicit Runge--Kutta (DIRK) methods are a prominent class of numerical methods for solving stiff systems of ordinary differential equations (ODEs). Stiffness does not only impose stability challenges on Runge--Kutta methods; it can also degrade the order of convergence. This so-called order reduction phenomenon occurs when assumptions used for classical convergence analysis, e.g., an asymptotically small step size, fail to hold. In a prior paper by the authors, sharp order conditions and global error bounds for Runge--Kutta methods were developed, which hold uniformly with respect to stiffness when applied to a wide class of semilinear ODEs. In this work, those conditions are leveraged to construct the first DIRK methods of order four and five which satisfy these conditions and thus do not exhibit order reduction. Numerical results demonstrate that for a broad class of relevant nonlinear test problems, these new methods successfully mitigate order reduction, accurately estimate local error via an embedding for adaptive step size control, and can outperform classical DIRK methods.
\end{abstract}

%

\begin{keyword}
Order reduction \sep Runge--Kutta methods \sep Stiffness \sep Semilinear \sep DIRK methods \sep embedded methods

\MSC 65L05 \sep 65L06 \sep 65L20 \sep 65M20
\end{keyword}

\end{frontmatter}

\section{Introduction}

This work focuses on implicit Runge--Kutta methods applied to stiff semilinear differential equations of the form
\begin{equation} \label{eq:ODE}
    y' = f(y) \coloneqq Jy + g(y),  
    \qquad
    y(\ts) = y_0,
    \qquad
    \timevar \in [\ts, \tf],
\end{equation}
where
\begin{equation}\label{eq:MapDef}
    y(\timevar) \in \R^{\nvar}, \qquad J \in \R^{\nvar \times \nvar}, \qquad f, g : \R^{\nvar} \rightarrow \R^{\nvar}.
\end{equation}
Here $f$ and $g$ are $p$ times continuously differentiable functions (or in the very least $f,g$ are $C^p$ in a tubular neighborhood of the solution $y(t)$) which implies $y$ is $p+1$ times continuously differentiable.
The matrix $J$ satisfies the one-sided Lipschitz condition
\begin{equation} \label{eq:one_sided_Lipschitz}
    y^T J y \leq 0, \qquad \forall y \in \R^{\nvar}.    
\end{equation}
Condition \cref{eq:one_sided_Lipschitz} restricts the eigenvalues of $J$ to the left half of the complex plane, however places no bound on their magnitude (cf. \cite[Section 2]{roberts2026runge}).  

Equations of the form \cref{eq:ODE} often arise from \emph{method of lines} discretizations of partial differential equations (PDE) where $J$ approximates the highest order spatial derivative.  Thus, we are interested in the stiff setting where the step size is much larger than the fastest time scale, given by the reciprocal of the largest eigenvalue of $J$ in magnitude.
In this regime, traditional Runge--Kutta methods may converge at rates lower than their classical order.

\subsection{Runge--Kutta Methods: Classical Order Versus Uniform Accuracy}

A Runge--Kutta method approximates $y(t_n) \approx y_n$ with $s$ stages as 
\begin{subequations} \label{eq:RK}
    \begin{align}
        \label{eq:RK:stages}
        Y_{n,i} &= y_n + \dt_n \sum_{j=1}^{s} a_{i,j} f(Y_{n,j}), \qquad i = 1, \dots, s, \\
        \label{eq:RK:step}
        y_{n+1} &= y_n + \dt_n \sum_{i=1}^{s} b_i f(Y_{n,i}).
    \end{align}
    Here, $\dt_n$ is the variable step size with time grid $\timevar_{n+1} = \timevar_{n} + h_n$ for $n = 0, 1, 2, \ldots$. The  scheme is then defined by its coefficients
    \begin{align*}
        A = (a_{i,j})_{i,j=1}^s,
        \quad
        b = (b_i)_{i=1}^s,
        \quad c = (c_i)_{i=1}^s = A \one,
        \quad
        \one = [1, \dots, 1]^T \in \R^{s}.
    \end{align*}
    A DIRK method has a lower triangular $A$, and there are a number of important special cases including SDIRK, EDIRK, and ESDIRK structures.
    We refer readers to \cite[Section 2]{kennedy2016diagonally} for an overview of these acronyms.
    To enable standard local error estimation and step size control \cite[Section II.4]{hairer1993solving},
    an embedded Runge--Kutta method computes a secondary solution using weights $\widehat{b} = (\widehat{b}_i)_{i=1}^s$:
    \begin{equation} \label{eq:RK:embedding}
        \widehat{y}_{n+1} = y_n + \dt_n \sum_{i=1}^{s} \widehat{b}_i f(Y_{n,i}).
    \end{equation}
\end{subequations}

Traditional Runge--Kutta approximation theory relies on expanding the local error in powers of $\dt_0$ via Taylor series \cite[Section II.2]{hairer1993solving}:
\begin{equation}
    \begin{split}
        y(t_1) - y_1
        &= \dt_0 \Big(1 - b^T \one\Big) f 
        + \dt_0^2 \Big( \tfrac{1}{2} - b^T c \Big) f' f 
        + \dt_0^3 \Big( \tfrac{1}{6} - \tfrac{1}{2}b^T c^2 \Big) f''(f, f) \\ \label{eq:ClassicalExp}
        & \quad + \dt_0^3 \Big( \tfrac{1}{6} - b^T A c \Big) f' f' f + \ldots + h_0^{p+1} e_{p+1}(h_0).
    \end{split}
\end{equation}
Here, $e_{p+1}$ denotes the error term, and for brevity, we suppress the argument $y_0$ in $f$ and its derivatives. Setting the parenthetical terms in \cref{eq:ClassicalExp} to zero yields the classical order conditions. This provides a pathway to construct Runge--Kutta schemes with high classical order $p$, e.g., by enforcing $y(\timevar_1) - y_1$ to be $\order{\dt_0^{p+1}}$.

The problem with \cref{eq:ClassicalExp} is $e_{p+1}$ contains powers of $J$ which can be disproportionately large.
The terms with the smallest power of $\dt_0$ may not dominate the error but rather the terms in which $f$ and $f'$ appear the most in the elementary differential.
When \cref{eq:ODE} arises as a discretization of an initial boundary value problem with time dependent boundary conditions, the mechanism exists for a reduction of order in the local error \cite{rosales2024spatial}.

Recent work \cite{roberts2026runge} derived an alternative expansion for the local error. The key novelty is that the error $\widehat{e}_{p+1}$ remains uniformly bounded in the (stiff) variable $Z \coloneqq \dt_0 J$:
\begin{equation} \label{eq:StiffExp}
    \begin{split}
        y(t_1) - y_1
        &= \dt_0 \, \qo{1} \, y'
        + \dt_0^2 \, \qo{2} \, y'' 
        + \dt_0^3 \, \qo{3} \, y^{(3)} \\
        & \quad + \dt_0^2 \Big(b^T \! \otimes Z\Big) \Big(I - A \otimes Z\Big)^{\!-1} \! \Big( \so{2} \otimes y'' \Big) \\
        &\quad + \dt_0^3 \Big(b^T \! \otimes Z\Big) \Big(I - A \otimes Z\Big)^{\!-1} \! \Big( \so{3} \otimes y^{(3)} \Big) \\
        & \quad + \dt_0^3 \Big(b^T \! \otimes I\Big) \Big(I - A \otimes Z\Big)^{\!-1} \! \Big(I \otimes g'\Big)\Big(I - A \otimes Z\Big)^{\!-1} \! \Big( \so{2} \otimes y'' \Big) \hspace{-.2em} \\ 
        & \quad + \dots + \dt_0^{p+1} \widehat{e}_{p+1}(h_0).
    \end{split}
\end{equation}
Here, we suppress the $t_0$ argument in $y$ and its derivatives, as well as the $y(t_0)$ argument for $g'$.
The Kronecker product is denoted by $\otimes$, and
\begin{equation} \label{eq:gamma}
    \qo{\ell} \coloneqq \frac{1}{\ell!} - \frac{b^T c^{\ell - 1}}{(\ell - 1)!}, \qquad 
    \so{\ell} \coloneqq \frac{c^{\ell}}{\ell !} - \frac{A c^{\ell-1}}{(\ell - 1)!},  \qquad 
    (\ell \geq 1),
\end{equation}
where $c^{\ell} = (c_1^{\ell}, c_2^{\ell}, \ldots, c_s^{\ell})^T$.
We note that \cref{eq:gamma} is related to standard Runge--Kutta simplifying assumptions \cite[p. 208]{hairer1993solving} as follows:
\begin{align*}
    B(q_1): \quad \qo{1} = \qo{2} = \dots = \qo{q_1} = 0, \\
    C(q_2): \quad \so{1} = \so{2} = \dots = \so{q_2} = 0.
\end{align*}

The expansion in \cref{eq:StiffExp} provides a pathway to derive \emph{stiff semilinear order conditions}: pick the Runge--Kutta coefficients so that the local error in \cref{eq:StiffExp} vanishes to order $p+1$.
It is clear this can be done through the $p$-th \emph{stage order} conditions $B(p)$ and $C(p)$.
Stage order \cite[Section IV.15]{hairer1996solving}, however, is overly restrictive; it is limited to two for DIRK methods.
We will show below that the semilinear order conditions are less restrictive and admit solutions with a DIRK structure.

\subsection{Order Reduction and Work in Context}

The order reduction phenomenon has a long history of analysis and mitigation techniques.
A large portion of the literature has focused on linear ODEs \cite{prothero1974stability,ostermann1992runge,rang2014analysis,ketcheson2020dirk,roberts2022eliminating,biswas2023explicit,ArranzSimonPalencia2025, ArranzSimonCanoPalencia2026} or approaches that modify ODEs \cite{carpenter1993stability,pathria1997correct} (often the boundary condition in the case of PDEs) so that it is amenable to a particular Runge--Kutta method.
Works that consider semilinear \cite{burrage1986study,auzinger1992extension,calvo2000runge} and nonlinear problems \cite{frank1985order,dekker1984stability,burrage1987order,hairer1996solving} typically relate the order of convergence to the stage order of the Runge--Kutta method.

Some works have considered approaches to mitigate order reduction for classes of nonlinear problems without resorting to high stage order.
A sharp order condition theory for Runge--Kutta methods applied to stiff semilinear ODEs was developed in \cite{roberts2026runge}, and the current paper serves as a companion to this.
An alternative approach based on rational methods was proposed in \cite{ArranzSimonCanoPalencia2025semi} for this class of problems.
Skvortsov \cite{skvortsov2003accuracy,skvortsov2010model} constructed DIRK methods that exactly integrate nonlinear versions of the Prothero--Robinson problem, among other model problems.
In \cite{BoscarinoRusso2009,boscarino2007error}, the authors considered implicit-explicit Runge--Kutta methods satisfying order conditions for differential-algebraic equations which arise from singular perturbation analysis.
Although these conditions are necessary to eliminate order reduction and often work well in practice, they are not sufficient.
In the context of exponential integrators, high order schemes satisfying stiff order conditions have been proposed in \cite{LuanOstermann2014,luan2021efficient,luan2025sixth}.

The effect of order reduction on error estimation and step size adaptivity is scarcely explored \cite{donald1992implications,gustafsson1994control} \cite[pp.~113--114]{hairer1996solving}.
Similarly, there are few attempts to mitigate order reduction when deriving embedded Runge--Kutta methods.
Most rely on high stage order \cite{kvaerno2004singly,kennedy2016diagonally}; however, at least one work has considered less restrictive conditions for linear ODEs \cite{rang2014analysis}.

\subsection{Contributions and Outline of the Paper}

There are four primary contributions of this paper:

\begin{enumerate}
    \item 
    After introducing stiff semilinear order conditions in \cref{Sec:BackgroundSemilinearConditions}, we show that they can be satisfied through a set of algebraic matrix equations in \cref{Sec:AlgebraicRiccatiEq}.
    
    \item 
    In \cref{sec:derivations}, we devise new fourth and fifth order DIRK methods that satisfy the stiff semilinear order conditions and have optimized error constants and desirable stability properties such as L-stability.
    %

    \item Through extensive numerical experiments in \cref{sec:numerical_results}, we show that the new methods converge at their theoretically predicted orders when applied to semilinear ODEs and PDEs. Moreover, the methods alleviate order reduction for nonlinear problems not covered by the theory.
    
    \item 
    In \cref{sec:adaptive_time_stepping}, we demonstrate that classical embedded methods can suffer from order reduction, underestimate the local truncation error, and fail to meet user-specified tolerances.
    Embeddings in our new schemes recover the full order, allowing for more accurate and efficient adaptive time-stepping.
\end{enumerate}

We finish with our concluding remarks in \cref{sec:conclusion}.

\section{Background and Semilinear Order Conditions}
\label{Sec:BackgroundSemilinearConditions}

In this section, we outline the semilinear order conditions and convergence results adapted from \cite{roberts2026runge}. While the theory allows for a general inner product space on $\R^\nvar$, for simplicity we state the results with a normalized $\ell^2$ inner product
\begin{align*}
    \langle u, v \rangle = \frac{1}{\nvar} \sum_{j=1}^{\nvar} u_j v_j, \qquad u,v \in \R^\nvar.
\end{align*}
When $u$ is a discrete approximation to an $L^2$ Riemann integrable function, $\|u\|$ scales with the $L^2$ integral norm.

The convergence theorem in this section assumes the functions in \cref{eq:ODE} are bounded in the sense that there are constants $M, L > 0$ for which
\begin{equation} \label{eq:ODE_assumptions:nonlinear}
    \norm{g(y) - g(z)} \leq L \norm{y-z}
    \quad \forall \,  y, z \in \R^N,
\end{equation}
and
\begin{equation}\label{eq:ODE_assumptions:diff}
    \begin{alignedat}{3}
        \norm{y^{(k)}(\timevar)} &\leq M
        &\quad &\forall \, \timevar \in [\ts, \tf],
        &\quad k &= 1, \dots, \psl+1, \\
        \norm{g^{(k)}(y)} &\leq M
        &\quad &\forall \, y \in \R^N,
        &\quad k &= 1, \dots, \psl.
    \end{alignedat}
\end{equation}
As mentioned in the introduction, the domain $\R^N$ in \cref{eq:ODE_assumptions:nonlinear} and \cref{eq:ODE_assumptions:diff} can be replaced with tubular neighborhood of the solution at the expense of additional restrictions on $\dt_n$. 

The convergence theorem also makes use of the following assumptions on the Runge--Kutta scheme. The matrix $I - z A$ is assumed to be non-singular for all $z \in \C^{-} \coloneqq \{z \in \C : \Re(z) \leq 0 \}$ and 
\begin{alignat}{2}\label{eq:RK_Stability}
    (I - zA)^{-1}  & \qquad \textrm{and} \qquad &   z b^T (I - zA)^{-1} 
\end{alignat}
are uniformly bounded in $\C^{-}$.
Conditions in \cref{eq:RK_Stability} are also referred to as AS- and ASI-stability \cite[Def. 3.1--3.2]{burrage1986study}.

A Runge--Kutta scheme is said to have \emph{semilinear order} $\psl$ (cf. \cite[Definition 3]{roberts2026runge}) provided it satisfies all conditions in \cref{tab:order_conditions} up to label $\psl$ (e.g., $\psl = 3$ schemes satisfy conditions 1a, 2a, 3a, 3b).
The matrix $C$ used in the table is defined as
\begin{align}\label{eq:MatrixC}
     C \coloneqq \diag(c_1, \dots, c_s).
\end{align}

\begin{table}[ht!]
    \centering
    \begin{tabular}{r|l|l}
        Label & Order Condition $(\forall i_1, i_2, i_3 \in \{0, \dots, s - 1 \})$ & Implied By \\ \hline
        1a & $0 = 1 - b^T \one$ & Classical order 1 \\ \hline
        2a & $0 = \frac{1}{2} - b^T c = b^T A^{i_1} \left( \frac{c^{2}}{2} - A c \right)$ & Stage order 2 \\ \hline
        3a & $0 = \frac{1}{6} - \frac{b^T c^2}{2} =  b^T A^{i_1} \left( \frac{c^{3}}{6} - \frac{A c^2}{2} \right)$ & Stage order 3 \\
        3b & $0 = b^T A^{i_1 + i_2} \left( \frac{c^{2}}{2} - A c \right)$ & 2a \\ \hline
        4a & $0 = \frac{1}{24} - \frac{b^T c^3}{6} =  b^T A^{i_1} \left( \frac{c^{4}}{24} - \frac{A c^3}{6} \right)$ & Stage order 4 \\
        4b & $0 = b^T A^{i_1} C A^{i_2} \left( \frac{c^{2}}{2} - A c \right)$ & Stage order 2 \\
        4c & $0 = b^T A^{i_1 + i_2} \left( \frac{c^{3}}{6} - \frac{A c^2}{2} \right)$ & 3a \\
        4d & $0 = b^T A^{i_1 + i_2 + i_3 + 1} \left( \frac{c^{2}}{2} - A c \right)$ & 2a \\
        %
    \end{tabular}
    \caption{There are eight semilinear order conditions up to order four, however conditions 3b, 4c, and 4d are redundant because they are implied by lower order conditions. Note that each condition must hold over all powers of the matrix $A$. When $i_1 = i_2 = i_3 = 0$, these are classical order conditions in the form of Albrecht \cite{albrecht1996runge}.}
    \label{tab:order_conditions}
\end{table}


We then have the following result.

\begin{theorem}[Uniform-in-$J$ Error Estimates \cite{roberts2026runge}]\label{thm:MainLTE} 
    Let $(A,b,c)$ be a Runge--Kutta scheme with semilinear order $\psl$ (\cref{tab:order_conditions}) and the boundedness condition in \cref{eq:RK_Stability}.  Assume that \cref{eq:one_sided_Lipschitz}, \cref{eq:ODE_assumptions:nonlinear} and \cref{eq:ODE_assumptions:diff} hold.
    
    Then there are constants $D$ and $\widetilde{\dt}$ depending only on the $L$ and $M$, and the method coefficients (but not on $J, Z$) for which the local error \cref{eq:StiffExp} satisfies
    \begin{equation}\label{eq:LTE_ErrorEst}
        \norm{y(t_1) - y_1} \leq D \dt_0^{\psl + 1}, \qquad \forall \dt_0 \in [0, \widetilde{\dt}) .
    \end{equation}    

    Furthermore, when \cref{eq:RK} is applied to \cref{eq:ODE} with equal step sizes $\dt_n = \dt$, the global error satisfies 
    \begin{equation} \label{eq:global_error}
        \begin{split}
            \norm{y(t_n) - y_n} &\leq \overline{D} \dt^{\psl},
            \qquad            
            \forall \dt \in [0, \overline{\dt}), \quad
            n \dt \leq \tf-\ts. 
        \end{split}        
    \end{equation}
    Again, $\overline{D}, \overline{\dt}$ are independent of $J$ or $Z$. 
\end{theorem}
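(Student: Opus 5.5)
The local error bound comes from a stiff (uniform-in-$Z$) expansion of the defect, and the global bound from a stability argument for the nonlinear stage equations.

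\textbf{Local error.} Write the exact solution at the stage times, $y(t_0+c_i\dt_0)$. Define the stage defects $\Delta_i$ by
\[
y(t_0+c_i\dt_0)=y_0+\dt_0\sum_j a_{ij}\bigl(Jy(t_0+c_j\dt_0)+g(y(t_0+c_j\dt_0))\bigr)+\Delta_i .
\]
Define the step defect $\delta$ analogously with weights $b$. Taylor expanding $y$ about $t_0$ gives $\Delta=\sum_{\ell\ge1}\dt_0^\ell\,\so{\ell}\otimes y^{(\ell)}+\order{\dt_0^{\psl+1}}$ and $\delta=\sum_\ell \dt_0^\ell\qo{\ell}y^{(\ell)}+\order{\dt_0^{\psl+1}}$. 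Here $y^{(\ell)}$ is used as is and never rewritten through $J$, so the remainders involve only $M$.

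Subtracting the numerical stages, let $E=(Y_i-y(t_0+c_i\dt_0))_i$. It satisfies
\[
(I-A\otimes Z)E=\dt_0(A\otimes I)\,G+\Delta ,
\]
where $G$ collects the differences $g(Y_j)-g(y(t_0+c_j\dt_0))$. AS-stability \cref{eq:RK_Stability} gives a uniform bound on $(I-A\otimes Z)^{-1}$ over the spectrum. To handle a non-normal $J$ with only \cref{eq:one_sided_Lipschitz}, I would invoke a von Neumann-type theorem: a bounded rational function on $\C^-$ applied to a matrix with $y^TJy\le0$ is bounded in norm. Under the Lipschitz bound \cref{eq:ODE_assumptions:nonlinear}, a fixed-point argument for $\dt_0<\widetilde\dt$ then yields existence of the stages and the bound $\norm{E}\lesssim\norm{(I-A\otimes Z)^{-1}\Delta}$.

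\textbf{Expanding the step error.} Next, expand $g$ around the exact stages. Iterate the relation $E=(I-A\otimes Z)^{-1}\bigl(\Delta+\dt_0(A\otimes I)(I\otimes g')E+\dots\bigr)$ and substitute into
\[
y_1-y(t_1)=\dt_0(b^T\otimes Z)E+\dt_0(b^T\otimes I)G-\delta .
\]
This reproduces \cref{eq:StiffExp}. Each term has the form of a rational function of $Z$, namely a product of factors $(I-A\otimes Z)^{-1}$, $b^T\otimes Z$, $C$, and derivatives $g^{(k)}$, applied to $\so{\ell}\otimes y^{(\ell)}$.

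The main obstacle is showing that every term of order $\le\psl$ vanishes using only the algebraic conditions of \cref{tab:order_conditions}. My plan for this step is as follows. Expand $(I-A\otimes Z)^{-1}$ via Cayley--Hamilton: it equals $\sum_{i=0}^{s-1}A^i\otimes r_i(Z)$ with the $r_i$ rational and bounded on $\C^-$. Similarly, write $z b^T(I-zA)^{-1}$ as $b^T$ times polynomials in $A$ with bounded coefficients. Then each low-order term reduces to a combination of $b^TA^{i_1}CA^{i_2}\cdots\so{\ell}$, which the table forces to zero for all $i_k\le s-1$.

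The factor $C$ arises from Taylor expanding $g'(y(t_0+c_i\dt_0))$ in time, so that $C$ multiplies $y''$ terms. The remaining terms are $\order{\dt_0^{\psl+1}}$ with constants depending only on $L$, $M$, and the uniform bounds; this gives \cref{eq:LTE_ErrorEst}.

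\textbf{Global error.} A naive Lady Windermere's fan argument loses one power of $\dt$ beyond the local bound, which gives the desired order $\psl$ provided the numerical flow is stable uniformly in $J$. Write $y_{n+1}=R(Z)y_n+\dt\,\Phi(y_n)$, where $R(z)=1+zb^T(I-zA)^{-1}\one$ and $\Phi$ is Lipschitz uniformly in $J$ by the ASI bound. Since $|R|\le1$ on $\C^-$, the same matrix-function theorem gives $\norm{R(Z)}\le1$. Hence the error recursion is $\norm{e_{n+1}}\le(1+C\dt)\norm{e_n}+D\dt^{\psl+1}$. Discrete Gronwall then yields \cref{eq:global_error}, with $\overline{\dt}\le\widetilde\dt$ needed for solvability at each step.
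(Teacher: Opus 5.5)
Your local-error argument is sound and mirrors the stiff expansion \cref{eq:StiffExp} that the paper imports from its companion work. You write the defects through $y^{(\ell)}(t_0)$, so only $M$ enters, and you bound $(I-A\otimes Z)^{-1}$ and $(b^T\otimes Z)(I-A\otimes Z)^{-1}$ uniformly in $J$ via von Neumann's theorem. The terms through order $\psl$ then vanish after expanding these operators in powers of $A$; this is why \cref{tab:order_conditions} is imposed for all $A^{i}$. The $C$ in 4b comes from $g'(y(t_0+c_j\dt_0))$, and the extra $A$ in 4d from the second iterate of the stage equation.

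A few small corrections:
\begin{itemize}
\item The stage error satisfies $(I-A\otimes Z)E=\dt_0(A\otimes I)G-\Delta$, and the step error is $(b^T\otimes Z)E+\dt_0(b^T\otimes I)G-\delta$, with no extra $\dt_0$ on the $Z$ term. Eliminating $E$ gives $-\delta-(b^T\otimes Z)(I-A\otimes Z)^{-1}\Delta+\dt_0(b^T\otimes I)(I-A\otimes Z)^{-1}G$, which is where ASI-stability is actually used.
\item For the coefficient functions $r_i$ to be bounded, expand in linearly independent powers of $A$ (minimal polynomial). Characteristic-polynomial coefficients can blow up at $z=\infty$ when $0$ is a repeated eigenvalue.
\item The Lipschitz bound on $\Phi=(b^T\otimes I)(I-A\otimes Z)^{-1}g(Y(\cdot))$ comes from AS-stability, not ASI-stability.
\end{itemize}

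The genuine gap is in the global step. You use $\abs{R(z)}\le1$ on $\C^{-}$, i.e.\ A-stability. But AS- and ASI-stability only make $R(z)=1+zb^T(I-zA)^{-1}\one$ bounded on $\C^{-}$, not bounded by one, and no argument can avoid this under the listed hypotheses. Take the $\theta$-method ($A=[\theta]$, $b=[1]$, $0<\theta<\tfrac12$). It satisfies \cref{eq:RK_Stability} and has $\psl=1$, yet $\abs{R(-K)}>1$ for $K>2/(1-2\theta)$. Apply it to $u'=\lambda u+\cos v$, $v'=1$, started on the slow manifold $u(t)=\int_{-\infty}^{t}e^{\lambda(t-\tau)}\cos\tau\ud{\tau}$. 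This problem meets \cref{eq:one_sided_Lipschitz}, \cref{eq:ODE_assumptions:nonlinear} and \cref{eq:ODE_assumptions:diff} uniformly for $\lambda\le-1$. With $\lambda=-K/\dt$, the local errors are $\order{\dt^{3}}$ but nonzero. Each is multiplied by $R(-K)$ at every one of the $\sim(\tf-\ts)/\dt$ subsequent steps, so the global error grows exponentially as $\dt\to0$.

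You therefore need A-stability (or power-boundedness of $R(Z)$) as an explicit extra hypothesis for \cref{eq:global_error}; the restatement here lists only \cref{eq:RK_Stability}. With it added, your recursion $\norm{e_{n+1}}\le(1+C\dt)\norm{e_n}+D\dt^{\psl+1}$ and the discrete Gronwall argument finish the proof.
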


\begin{remark}[Superconvergence {\cite[Theorem 5]{roberts2026runge}}]
\label{rmk:supercvg} 
The order in \cref{eq:global_error} is improved to $\psl+1$ for problems where $y$ and $g$ are $\psl+2$ and $\psl+1$ times differentiable, respectively, and the Runge--Kutta scheme also satisfies: (i) the $\psl+1$ classical order conditions, and (ii) the technical condition 
\begin{align}\label{eq:TechnicalCond}
    \lim_{z\rightarrow \infty} R(z) \neq 1, \qquad \textrm{and} \qquad z^{-1}(1 - R(z)) \quad \textrm{has no zeros in } \C^{-}.
\end{align}
Here $R(z) \coloneqq 1 + b^T (I - z A)^{-1} \one$ is the linear stability function.
\end{remark}



\section{The Semilinear Order Conditions in Matrix Form}
\label{Sec:AlgebraicRiccatiEq}

In this section, we convert the conditions in \cref{tab:order_conditions} into a set of matrix equations to enable the construction of schemes with semilinear order up to four. The idea is to interpret the conditions in \cref{tab:order_conditions} as a set of orthogonality relations.  

To start, introduce the matrix 
\begin{align*}
    \Lambda = \begin{bmatrix}
        \Lambda_1 & \Lambda_2
    \end{bmatrix} \in \R^{s\times d} \qquad \textrm{with} \qquad \Lambda_1 \in \R^{s \times d_1}, \quad \Lambda_2 \in \R^{s \times d_2},
\end{align*}
where $d = d_1 + d_2$. At this point, we do not specify the dimension $d_1$ or $d_2$. We choose $\Lambda$ so that:
\begin{enumerate}
    \item[(P1)] The column spaces of $\Lambda_1$ and $\Lambda$ are $A$-invariant 
\begin{align}
    A \col(\Lambda) &\subseteq \col(\Lambda) , \\
     A \col(\Lambda_1) &\subseteq \col(\Lambda_1) .
\end{align}
    \item[(P2)] The column spaces of $\Lambda_1$ and $\Lambda$ contain 
    \begin{align}
        \so{2} \in \col(\Lambda_1) \qquad \textrm{and} 
        \qquad 
        \so{3}, \so{4} \in \col(\Lambda).
    \end{align}
    \item[(P3)] The matrix $C$ from \cref{eq:MatrixC} maps $\col(\Lambda_1)$ into $\col(\Lambda)$
    \begin{align*}
        C \,\col(\Lambda_1) \subseteq \col(\Lambda) .
    \end{align*}
    \item[(P4)] The vector $b$ is orthogonal to $\col(\Lambda)$, i.e., $b \perp \col(\Lambda)$.
\end{enumerate}
As we will show below, together with the simplifying assumption $B(4)$, conditions (P1)--(P4) are sufficient to satisfy the semilinear conditions of order 4 in  \cref{tab:order_conditions}.  

Conditions (P1)--(P4) are also equivalent to the existence of matrices
\begin{align*}
    X \coloneqq \begin{bmatrix}
        X_{11} & X_{12}  \\
        0 & X_{22} 
    \end{bmatrix} \in \R^{d \times d}, \qquad
    W \coloneqq \begin{bmatrix}
        W_{11} & W_{12} \\
        0 & W_{22} 
    \end{bmatrix} \in \R^{d \times 3}, \qquad Y \in \R^{d \times d_1},
\end{align*}
where
\begin{alignat}{3}\nonumber 
    X_{11} &\in \R^{d_1\times d_1}& \qquad   
    X_{12} &\in \R^{d_1\times d_2}& \qquad 
    X_{22} &\in \R^{d_2 \times d_2}, \\ \nonumber 
    W_{11} &\in \R^{d_1} &
    W_{12} &\in \R^{d_1 \times 2} &
    W_{22} &\in \R^{d_2 \times 2},
\end{alignat}
satisfying the following system of equations:
\begin{subequations}\label{eq:MatSystem}
\begin{empheq}[left=\empheqlbrace]{align}
    \label{eq:MatSystem:Eq1}
    A \Lambda &= \Lambda X \\
    \label{eq:MatSystem:Eq2}
    T_{4} &= \Lambda W \\
    \label{eq:MatSystem:Eq3}
    C \Lambda_1 &= \Lambda Y \\
    \label{eq:MatSystem:Eq4}
    b^T \Lambda &= 0.
\end{empheq}   
\end{subequations}
In \cref{eq:MatSystem:Eq2}, the matrix $T_j$ is defined as
\begin{align*}
    T_j \coloneqq \begin{bmatrix}
        \so{2}, & \so{3}, & \cdots, & \so{j}
    \end{bmatrix}.
\end{align*}

The following theorem formalizes the sufficiency of $B(4)$ and \cref{eq:MatSystem} for satisfying the semilinear order 4 conditions.
\begin{theorem}[Matrix equations for semilinear order four]
    Let $(A,b,c)$ be a Runge--Kutta scheme satisfying the condition $B(4)$. If for some choice of $d_1, d_2$, there exists a solution $(\Lambda, X, W, Y)$ to the system of equations \cref{eq:MatSystem}, then the scheme has semilinear order $\psl = 4$. That is, the scheme satisfies the order conditions in \cref{tab:order_conditions}.
\end{theorem}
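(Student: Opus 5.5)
Our plan is to first turn the matrix system \cref{eq:MatSystem} back into the geometric properties (P1)--(P4), and then verify each row of \cref{tab:order_conditions} as an orthogonality statement.

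\textbf{From \cref{eq:MatSystem} to (P1)--(P4).}
\begin{itemize}
\item Equation \cref{eq:MatSystem:Eq1} says $A\col(\Lambda)\subseteq\col(\Lambda)$. Because the lower-left block of $X$ is zero, $A\Lambda_1=\Lambda_1X_{11}$, which gives $A$-invariance of $\col(\Lambda_1)$. This is (P1).
\item The zero block of $W$ gives $\so{2}=\Lambda_1W_{11}\in\col(\Lambda_1)$. The remaining columns give $\so{3},\so{4}\in\col(\Lambda)$. This is (P2).
\item Equation \cref{eq:MatSystem:Eq3} is exactly (P3).
\item Equation \cref{eq:MatSystem:Eq4} is exactly (P4).
\end{itemize}

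\textbf{The scalar conditions.} These are the parts of each row of the form $0=\frac{1}{\ell!}-\frac{b^Tc^{\ell-1}}{(\ell-1)!}$, i.e. $\qo{\ell}=0$ for $\ell=1,\dots,4$. They are precisely $B(4)$, which holds by hypothesis.

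\textbf{The vector conditions.} Each remaining condition has the form $b^T v=0$, where $v$ is one of
\begin{itemize}
\item $A^{k}\so{\ell}$ with $\ell\in\{2,3,4\}$ and $k\ge 0$ (conditions 2a, 3a, 3b, 4a, 4c, 4d);
\item $A^{i_1}CA^{i_2}\so{2}$ (condition 4b).
\end{itemize}
By (P4) it suffices to show every such $v$ lies in $\col(\Lambda)$.
\begin{itemize}
\item For the first family: $\so{\ell}\in\col(\Lambda)$ by (P2), and $\col(\Lambda)$ is $A$-invariant by (P1), so $A^k\so{\ell}\in\col(\Lambda)$. The combined exponents $i_1+i_2$ and $i_1+i_2+i_3+1$ are just nonnegative powers, so they are covered.
\item For 4b: $\so{2}\in\col(\Lambda_1)$, and $\col(\Lambda_1)$ is $A$-invariant, so $A^{i_2}\so{2}\in\col(\Lambda_1)$. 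By (P3), $CA^{i_2}\so{2}\in\col(\Lambda)$. Applying $A^{i_1}$ keeps the vector in $\col(\Lambda)$ by invariance.
\end{itemize}

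\textbf{Main obstacle.} The only delicate point is condition 4b. There the order of operations matters: $A$-invariance of the smaller space $\col(\Lambda_1)$ must be used before applying $C$, since $C$ need not preserve $\col(\Lambda)$. This is why the block-triangular structures of $X$ and $W$ are required, so we would take care to extract them explicitly from \cref{eq:MatSystem}.

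Once all conditions in \cref{tab:order_conditions} up to label 4 are verified, the scheme has $\psl=4$ by definition.
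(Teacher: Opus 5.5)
Your proof is correct and is essentially the paper's argument. The paper also takes the scalar conditions from $B(4)$. For 4b it writes $A^{i_2}\so{2}=\Lambda_1 u$ before applying $C$ via \cref{eq:MatSystem:Eq3}, and it dispatches the remaining conditions via $b^T A^j T_4 = b^T\Lambda X^j W = 0$. You phrase the same steps as column-space inclusions (P1)--(P4), whereas the paper works with the explicit matrix identity $A^j\Lambda=\Lambda X^j$.
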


\begin{proof} Condition $B(4)$ is a trivial restatement of the leftmost order condition in \cref{tab:order_conditions} lines 1a, 2a, 3a, 4a. \Cref{tab:order_conditions} line 4b follows since 
\begin{alignat*}{2}
    b^T A^{i_1} C A^{i_2} \so{2} &= b^T A^{i_1} C \Lambda_1 u & \qquad &\textrm{($A^{i_2}\so{2} = \Lambda_1 u$ for some $u$ by \cref{eq:MatSystem:Eq1,eq:MatSystem:Eq2})} \\
    &= b^T A^{i_1} \Lambda Y u  & \qquad &\textrm{(by \cref{eq:MatSystem:Eq3})} \\
    &= b^T \Lambda X^{i_1}Yu & \qquad &\textrm{(by \cref{eq:MatSystem:Eq1})} \\
    &= 0. & \qquad &\textrm{(by \cref{eq:MatSystem:Eq4})} 
\end{alignat*}
The remaining conditions (i.e., \cref{tab:order_conditions} lines 2a, 3a, 3b, 4a, 4c, 4d) follow since, for any power $j$
\begin{equation*}
    b^T A^j T_4
    = b^T A^j \Lambda W
    = (b^T \Lambda) X^j W
    = 0.
\end{equation*}
\end{proof}

\begin{corollary}[Semilinear conditions of order 3] A Runge--Kutta scheme has semilinear order three if $B(3)$ holds and there exist $\Lambda \in \R^{s\times d}$, $X \in \R^{d \times d}$, and $W \in \R^{d \times 2}$ for which the following (subset) of equations hold
\begin{equation} \label{eq:3rd_order_matrix}
    A \Lambda = \Lambda X, \quad 
    T_3 = \Lambda W, \quad
    b^T \Lambda = 0.
\end{equation}    
Note that no block structure on $\Lambda, X$ or $W$ is needed for the $\psl = 3$ equations.
\end{corollary}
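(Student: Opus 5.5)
The plan is to mirror the proof of the preceding theorem, restricted to the order-three conditions of \cref{tab:order_conditions}, namely lines 1a, 2a, 3a and 3b. These are the only conditions required for $\psl = 3$. Condition 4b is the only one that needed the block structure and \cref{eq:MatSystem:Eq3}, and it does not appear at order three. This is why no splitting of $\Lambda$ into $\Lambda_1,\Lambda_2$ is needed.

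First, $B(3)$ is exactly the set of leftmost (classical) equalities in lines 1a, 2a and 3a: $1 - b^T\one = 0$, $\tfrac12 - b^Tc = 0$ and $\tfrac16 - \tfrac12 b^Tc^2 = 0$. Next, the columns of $T_3 = [\so{2}, \so{3}]$ lie in $\col(\Lambda)$ by the second relation in \cref{eq:3rd_order_matrix}. By $A\Lambda = \Lambda X$ we get, inductively, $A^j \Lambda = \Lambda X^j$ for every $j \geq 0$. Combining these with $b^T\Lambda = 0$ gives
\begin{equation*}
    b^T A^j T_3 = b^T A^j \Lambda W = (b^T\Lambda) X^j W = 0
\end{equation*}
for every power $j$. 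Taking the first column with $j = i_1$ yields the remaining part of 2a. Taking the second column with $j = i_1$ yields the remaining part of 3a. Taking the first column with $j = i_1 + i_2$ yields 3b. The case $j = 0$ is covered, since $A^0 = I$ and $X^0 = I$.

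There is essentially no obstacle. The only point needing care is the inductive identity $A^j\Lambda = \Lambda X^j$, which follows immediately from $A^{j+1}\Lambda = A(\Lambda X^j) = (A\Lambda)X^j = \Lambda X^{j+1}$.

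Finally, I would remark that since 4b is absent, neither an invariant subspace for $\so{2}$ alone nor the relation $C\Lambda_1 = \Lambda Y$ is used. Hence $\Lambda$, $X$ and $W$ may be arbitrary matrices of compatible sizes, which justifies the final sentence of the corollary.
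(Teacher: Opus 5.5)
Your proposal is correct and matches the paper's approach: the corollary is the preceding theorem's proof restricted to lines 1a, 2a, 3a and 3b. $B(3)$ gives the classical equalities, and $b^T A^j T_3 = (b^T\Lambda)X^j W = 0$ gives the remaining ones; the relation $C\Lambda_1 = \Lambda Y$ and the block structure are needed only for 4b, so they can be dropped.
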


\section{Construction of methods with high semilinear order}
\label{sec:derivations}
In this section, we construct Runge--Kutta schemes that satisfy the semilinear order conditions up to order four, but with superconvergence (see \cref{rmk:supercvg}), have up to fifth order global error.
The methods also have desirable stability and accuracy properties, making them suitable for solving stiff, semilinear ODEs.  

We adopt the method naming convention \textit{TYPE}-($s$, $p$, $\psl$), where \textit{TYPE} denotes the method structure, e.g., ESDIRK or EDIRK.
Furthermore, $s$ is the number of stages, $p$ is the classical order, and $\psl$ is the semilinear order. 

A variety of schemes already exist in the literature with low semilinear order larger than one because the conditions in \cref{tab:order_conditions} can be satisfied via stage order conditions and reduce to the weak stage order conditions when $\psl \leq 3$ (see \cite[Remark 3]{roberts2026runge}).

\begin{itemize}\setlength\itemsep{0em}
    \item A range of EDIRK methods with stage order two can be found in \cite{kennedy2016diagonally}.
    With superconvergence, these can attain up to third order global error when solving \cref{eq:ODE}.

    \item A complete parameterization of DIRK-$(2,2,3)$, and DIRK-$(3,3,3)$ methods can be found in \cite[Section~7]{biswas2022algebraic}.

    \item A complete parameterization of ERK-$(3,2,2)$, ERK-$(4,3,2)$ and ERK-$(5,3,3)$ methods can be found in \cite[Section~3]{biswas2023explicit}.

    \item DIRK-$(4,3,3)$ and DIRK-$(6,4,3)$ methods that are L-stable and stiffly accurate can be found in \cite[p.~459]{ketcheson2020dirk}.
\end{itemize}

Given the extensive catalog of existing optimized methods with $\psl = 2$, we do not propose new schemes of this type.
\Cref{subsec:symbolic_derivation} details the derivation of ESDIRK methods with $\psl \in \{3, 4\}$ and stage order two, while \cref{subsec:numeric_derivation} details the derivation of EDIRK methods with $\psl = 4$ and stage order one.

A summary of the properties of our new methods can be found in \cref{tab:method_properties}.
As a point of comparison, both for analysis and numerical experiments, we consider two methods from the literature with $\psl=1$.
Following our naming conventions, these are referred to as SDIRK-(5,4,1) and SDIRK-(6,5,1).
The former, which is named SDIRK4M in \cite{kennedy2016diagonally}, is selected for its optimized error, L-stability, and quality embedding.
The latter, which appears in \cite{ismail1998embedded}, is one of the only embedded pairs we found in the literature with $p = 5$ and $\psl = 1$.

\subsection{Construction of ESDIRK-(8,4,3) and ESDIRK-(10,5,4)}
\label{subsec:symbolic_derivation}
Through symbolic computation, we construct here two stiffly accurate, L-stable schemes, with embeddings. The schemes are exact solutions of the classical order conditions and semilinear order conditions and complement existing schemes \cite{biswas2023design}. For simplicity, \ref{app:ESDIRK-(843)} and \ref{app:ESDIRK-(1054)} report the coefficients of the new schemes, ESDIRK-(8,4,3) and ESDIRK-(10,5,4), as rational approximations accurate to quadruple-precision. 

\medskip
\noindent \emph{Construction of} ESDIRK-(8,4,3).  One of the first design questions to consider is: How many stages should a method with $p=4$ and $\psl = 3$ have?
We have a lower bound of $s \geq 5$ from \cite[Theorem 3.2]{biswas2022algebraic} (take $(p, q, \kappa, \sigma) = (4, 3, 1, 1)$, where $\kappa = 1$ is allowed as the theorem holds if $A$ is singular provided $R(\infty) = 0$).
This lower bound is likely not sharp as we impose the additional ESDIRK structural constraint of $a_{2,2} = \ldots = a_{s,s} \eqqcolon d$.
Indeed, through symbolic computation, we did not find L-stable ESDIRK-($s$,4,3) schemes with $s \in \{5, 6\}$, but they exist for $s = 7$.
The extra degrees of freedom from $s=8$, however, allow for a smaller principal error and better internal stability \cite[Section~2.6]{kennedy2016diagonally} in the stiff limit.

To finalize our construction of an ESDIRK-(8,4,3), we use Mathematica and the Integreat library \cite{roberts2025integreat} to symbolically solve the third order conditions \cref{eq:3rd_order_matrix} and the following conditions:
\begin{alignat*}{2}
    B(5), \quad C(2), \quad b^T C A c^2 = \frac{1}{15}, & \qquad & \text{(Classical order conditions)}, \\
    %
    %
    \lim_{z \to \infty} e_i^T (I - z A)^{-1} \one = 0, \quad i = 4, \dots, 8, & \qquad & \text{(Internal stability)}, \\
    e_7^T A c^{i-1} = \frac{1}{i}, \quad i = 1, 2, 3, & \qquad & \text{(Embedding)}.
\end{alignat*}
Here, $e_i \in \R^{s}$ is the $i$-th unit vector.  The L-stability condition restricts the diagonal of $A$ to $d \in [0.24799, 0.67604]$ (see \cite[Table~5]{kennedy2016diagonally}). As in \cite[Section~8.2]{kennedy2019diagonally}, we set $d = \frac{31}{125}$ as a rational approximation to the interval's lower bound.

We use the penultimate stage, $Y_{n,7}$, as an internal embedded solution with classical order three and semilinear order two (both one lower than the primary method).
Therefore, $\widehat{b}_i = a_{7,i}$ for $i = 1, \dots, 8$, and $Y_{n,8} - Y_{n,7}$ can be used as an estimate of the local truncation error.
The three remaining free parameters are used to optimize the quality of the embedding as measured by $B^{(p+1)}$, $C^{(p+1)}$, and $E^{(p+1)}$ defined in \cite[Section~2.3]{kennedy2016diagonally}.

\medskip
\noindent \emph{Construction of} ESDIRK-(10,5,4).
We did not find an ESDIRK method with $p = 5$ and $\psl = 4$ in fewer than ten stages.
Using this minimum value of $s$, we symbolically solve the semilinear order conditions \cref{eq:MatSystem} and
\begin{alignat*}{2}
    B(6), \quad C(2), \quad b^T C A c^2 = \frac{1}{15}, & \qquad & \text{(Classical order conditions)}, \\
    \lim_{z \to \infty} e_i^T (I - z A)^{-1} \one = 0, \quad i = 3, \dots, 10, & \qquad & \text{(Internal stability)}.
\end{alignat*}
For L-stability, $d$ must be the root of
\begin{equation*}
    0 = 120 d^5-600 d^4+600 d^3-200 d^2+25 d -1
\end{equation*}
closest to $0.278$ as in \cite[Table~5]{kennedy2016diagonally}.

For an embedding, it is infeasible to use the penultimate stage as we did with ESDIRK-(8,4,3), and instead we use the general form \cref{eq:RK:embedding}.
All but one embedded coefficient is determined by the conditions for classical order four, semilinear order three, and A-stability.
The remaining 11 free parameters and several solution branches are used to optimize the principal error and embedding quality.

\subsection{Construction of EDIRK-(7,4,4) and EDIRK-(19,5,4)}
\label{subsec:numeric_derivation}
The $\psl = 4$ scheme in the previous subsection uses the classical $C(2)$ simplifying condition (note that $C(2)$ implies line 4b in \cref{tab:order_conditions} and \cref{eq:MatSystem:Eq3}). Due to the restrictive nature of stage order, it is of theoretical interest to construct schemes without $C(2)$, which we do here.

\medskip
\noindent \emph{Construction of} EDIRK-(7,4,4). Here we use numerical optimization in MATLAB: the order conditions and semilinear order conditions \cref{eq:MatSystem} are set as equality constraints and positivity of $c_i$ and $a_{i,i}$ for $i = 2, \dots, s$ as inequality constraints. 

An A-stability constraint is imposed by requiring the absolute value of the stability function to be less than or equal to one at a finite set of points on the imaginary axis, following the approach described in \cite{biswas2023design}.
This is a relaxation of the full A-stability condition; however, we verify post-hoc that any method found is indeed A-stable (see also \cite{juhlshirokoff2024}).
We utilize the \texttt{fmincon} algorithm to search for feasible methods, performing millions of evaluations with a constant objective function, starting from random initial guesses.
Ultimately, we select the method with the smallest principal error norm from those that satisfy all constraints, denoted here as the EDIRK-(7,4,4) method.
The coefficients refined to quadruple-precision with Mathematica are provided in \ref{app:EDIRK-(744)}.

With this derivation strategy and the very high dimensional search space, EDIRK-(7,4,4) is not optimal in terms of principal error and other properties.
It is primarily of theoretical interest and serves as a foundation for higher order schemes.
For this reason, we do not equip it with an embedded method.

We observe that the coefficients of EDIRK-(7,4,4) satisfy \cref{eq:MatSystem} for matrices of the form
\begin{align*}
    \Lambda = [\underbrace{\so{2}}_{\Lambda_1}, \underbrace{\so{3}}_{\Lambda_2}],
    \quad
    X = \begin{bmatrix}
        a_{4,4} & \star \\
        0 & a_{2,2}
    \end{bmatrix},
    \quad
    Y = \begin{bmatrix}
        c_4 & 0
    \end{bmatrix},
    \quad
    W = \begin{bmatrix}
        1 & 0 & \star \\
        0 & 1 & 3 a_{2,2}
    \end{bmatrix}.
\end{align*}
Note that $\so{2}$ is an eigenvector of both $C$ and $A$.

\medskip
\noindent \emph{Construction of} EDIRK-(19,5,4). A natural question is: Are there methods with $p = \psl + 1 = 5$ that do not have stage order two? A simple way to answer this affirmatively is to apply Richardson's extrapolation \cite[Section II.9]{hairer1993solving} to the EDIRK-(7,4,4) scheme.
This increases the classical order by one while maintaining $\psl = 4$.
EDIRK-(7,4,4) and any extrapolation of it have stage order exactly one.
Using the simplest step-number sequence of $\{1, 2\}$ for the extrapolation results in a 19 stage method.
At such a high number of stages, we cannot recommend this EDIRK-(19,5,4) method for practical computations, but it answers the theoretical question above positively.

\section{Numerical results: Convergence}
\label{sec:numerical_results}
We devise a series of test cases that reveal in which way the newly developed numerical methods alleviate order reduction, and how they perform in comparison to standard methods. We start with a semilinear variant of the classical Prothero--Robinson ODE problem (\cref{subsec:numerics_SLPR}).
Then we consider several PDE test problems that successively deviate further from the assumptions of \cref{Sec:BackgroundSemilinearConditions}, ranging from semilinear PDEs (\cref{subsec:numerics_SL_advection,subsec:Allen-Cahn}), to a nonlinear PDE whose highest derivative operators are linear (\cref{subsec:viscous_Burgers}). We then close with an ODE test case whose stiff terms are fully nonlinear (\cref{subsec:van_der_Pol}).
Throughout this and the next section, for notational economy, all methods from \cref{tab:method_properties} are denoted simply by their triplet $(s,p,\psl)$.
The code to reproduce all the numerical results 
is available in \cite{biswas2026semilinearRepro}.

In this section, all nonlinear systems arising in the Runge--Kutta schemes' stage solutions are solved via Newton iteration to high accuracy, so that the nonlinear solver error does not affect the temporal convergence curves.

\subsection{A semilinear variant of the Prothero--Robinson problem}
\label{subsec:numerics_SLPR}
Building on the idea of Prothero and Robinson \cite{prothero1974stability}, we construct a one-parameter family of test problems whose smooth closed-form solution is independent of a stiffness parameter $\lambda$; however, unlike \cite{prothero1974stability} the problem here is semilinear.
Suppose we have a non-stiff, nonlinear ODE $u'(\timevar) = \phi(u(\timevar))$ with known solution $u(\timevar)$.  The semilinear Prothero--Robinson problem is defined as
\begin{equation} \label{eq:SLPR}
    y'(\timevar) = \lambda (y(\timevar) - u(\timevar)) + \phi(y(\timevar)), \quad
    y(\ts) = u(\ts),
\end{equation}
with exact solution $y(\timevar) = u(\timevar)$.
Note that a similar construction has been presented by Skvortsov to generate nonlinear model problems \cite{skvortsov2003accuracy}.

For our numerical results, we choose the exact solution $u(\timevar) = \sqrt{1 + \timevar^2} - t$ and right-hand side $\phi(u(\timevar)) = \frac{-2 u(\timevar)^2}{1 + u(\timevar)^2}$, yielding the semilinear test problem
\begin{equation}\label{eq:SemiLinPR}
    y'(\timevar) = \lambda \left( y(\timevar) - \sqrt{1 + \timevar^2} + \timevar \right) - \frac{2 y(\timevar)^2}{1 + y(\timevar)^2}.
\end{equation}

\begin{figure}[htb]
	\begin{minipage}[b]{.32\textwidth}
		\includegraphics[width=\textwidth]{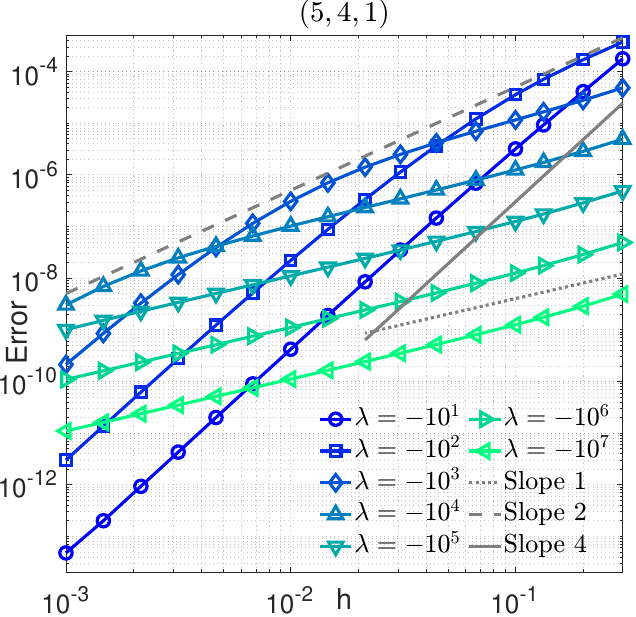}
	\end{minipage}
    \hfill
	\begin{minipage}[b]{.32\textwidth}
		\includegraphics[width=\textwidth]{Exp_figures/SL_PR_TC1_tf1.2_Convg_s5p4q1.pdf}
	\end{minipage}
    \hfill
	\begin{minipage}[b]{.32\textwidth}
		\includegraphics[width=\textwidth]{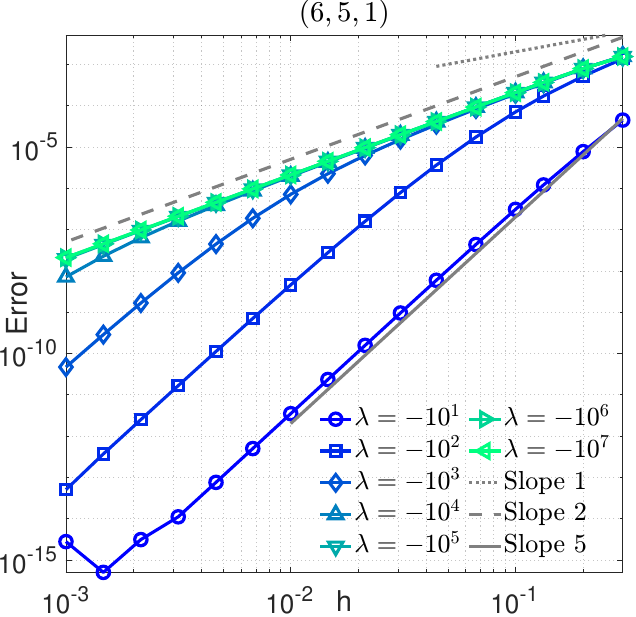}
	\end{minipage}

	\vspace{.5em} 
	
	\begin{minipage}[b]{.32\textwidth}
		\includegraphics[width=\textwidth]{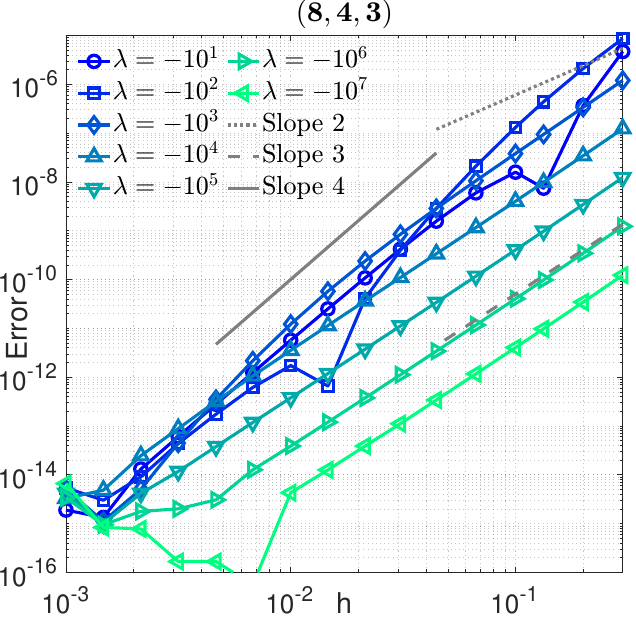}
	\end{minipage}
    \hfill
	\begin{minipage}[b]{.32\textwidth}
		\includegraphics[width=\textwidth]{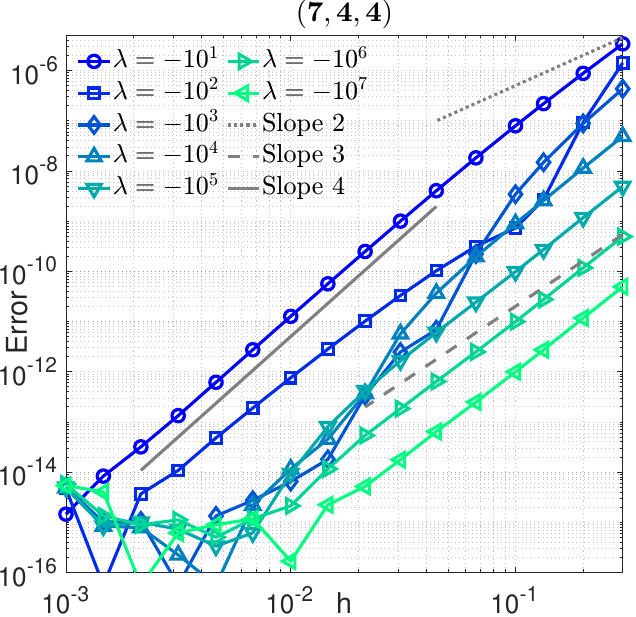}
	\end{minipage}
    \hfill
	\begin{minipage}[b]{.32\textwidth}
		\includegraphics[width=\textwidth]{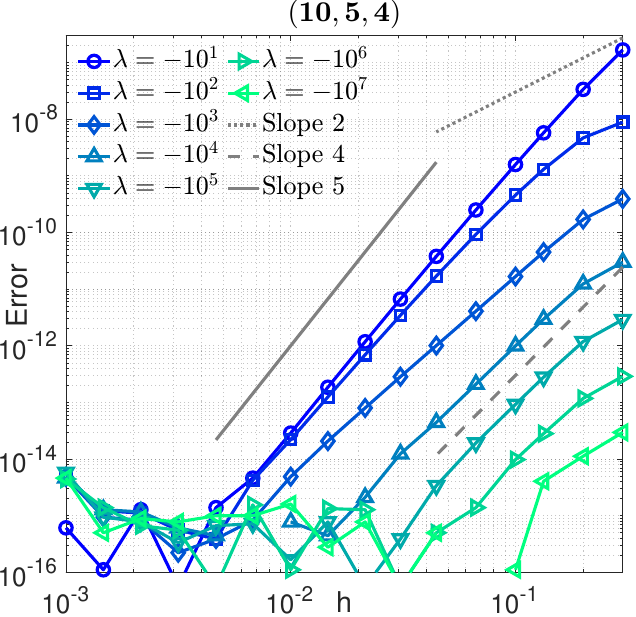}
	\end{minipage}
    \caption{Error convergence of different methods for the semilinear Prothero--Robinson problem \cref{eq:SemiLinPR}, shown for several values of the stiffness parameter $\lambda$ (corresponding to the different curves). The top row shows classical reference methods, while the bottom row displays new methods with high semilinear orders. Of these, the left and the right panels show methods with $\psl = p-1$, while the middle panel shows a fourth order method with $\psl = p$.}
 \label{fig:ErrConvg_SemiLinPR}
\end{figure}

Comparing different numerical methods, \cref{fig:ErrConvg_SemiLinPR} shows the resulting error convergence curves (in classical double-logarithmic scale) for the problem \cref{eq:SemiLinPR} at final time $t = 1.2$, using the stiffness parameter $\lambda = -10^\nu$ for $\nu \in \{1,\dots,7\}$.
%
The top row displays reference methods that satisfy the classical order conditions up to the respective orders ($p = 4$ for left and middle, and $p = 5$ on the right), but not any of the (other) semilinear order conditions.
These top row results show that the reference methods (i)~converge at their formal order $p$ in the non-stiff resolved regime ($\abs{\lambda\dt} \ll 1$) and they (ii)~clearly exhibit order reduction in the stiff regime ($\abs{\lambda\dt} \gg 1$), reducing to first order in $h$ for each $\lambda$. Furthermore, (iii)~the family of error curves, parameterized by $\lambda$, lies uniformly below a line of slope 2, matching the global (B-convergence) error bound established in \cite{roberts2026runge}. Another notable fact is that (iv)~the (5,4,1) method (being stiffly accurate) converges as $\lambda\to -\infty$ with $\abs{\lambda\dt} \gg 1$, while the non-stiffly accurate (6,5,1) method fails to establish such a convergence.

In the bottom row of \cref{fig:ErrConvg_SemiLinPR}, three new methods, which have high semilinear order, do not exhibit the characteristic order reduction L-shape that the classical methods show around $\lambda\dt \approx 1$. In particular, for sufficiently small step sizes, the new methods are notably more accurate than the reference schemes. At the same time, the new methods do not necessarily yield a uniform convergence rate across all step sizes. Instead, there is a line of slope $\min(p,\psl+1)$ below which all error curves lie for step sizes sufficiently small, independent of the stiffness parameter $\lambda$. In this test, all error curves stagnate around machine precision, and all new methods yield accurate results. 

\subsection{Methodology for PDE test problems}
\label{subsec:PDE_methodology}
In the next three subsections, we consider several PDE test problems. Consistent with much of the mathematical literature, we denote a PDE's solution by $u$, time by $t$, and space by $x$ (respectively $x_1$, $x_2$ in two space dimensions). The step size remains denoted by $\dt$.

A key aspect in evaluating the effectiveness of time integration methods for PDE problems is that a highly accurate spatial discretization is critical to effectively isolate the temporal error. Across the different problems, either spectral methods or high order spatial approximations with sufficiently small mesh sizes are used.


As the theory in \cref{Sec:BackgroundSemilinearConditions} allows for any inner-product-norm, we measure all spatial errors in a scaled $\ell^2$-norm which approximates the $L^2$-norm in space. Namely, if $y \in \R^{\nvar}$ with values of $y_j$ on a uniform grid with grid spacing $\Delta x = O(1/\nvar)$, then $\norm{y}_2^2 = \Delta x\sum_{j=1}^\nvar y_j^2$ (for the Chebyshev grids in \cref{subsec:Allen-Cahn}, appropriate weights get added in the summation). Norms involving the PDE solution use values of $u$ evaluated at the discrete grid points.

In all PDE test problems, we use spatial approximations that ensure that spatial errors are negligible (i.e., less than $3 \times 10^{-13}$). We estimate the spatial error by computing a reference solution to the method-of-lines (MOL) ODE via MATLAB's \texttt{ode45} with absolute tolerance $10^{-14}$ and relative tolerance $10^{-12}$. All results below report the numerical error of the fully discrete solution $U(\tf)$ against the PDE solution evaluated on the grid, $u(\cdot, \tf)$. This error is controlled by
\begin{equation}\label{eq:Triangle}
\|U(\tf)-u(\cdot,\tf)\|_{2} \le \underbrace{\|U(\tf)-u^{\mathrm{MOL}}(\tf)\|_{2}}_{\leq \overline{D} \dt^{\psl} \textrm{~from \cref{thm:MainLTE}} } + \underbrace{\|u^{\mathrm{MOL}}(\tf)-u(\cdot,\tf)\|_{2}}_{\le 3\times 10^{-13}},
\end{equation}
where $u^{\mathrm{MOL}}(t) \in \R^\nvar$ is the (exact) MOL solution. In the subsequent numerical tests of temporal errors, error curves then tend to stagnate at that semi-discretization error value of ${\sim}10^{-13}$ or below. In turn, for errors above this spatial error bound, the theory in \cref{Sec:BackgroundSemilinearConditions} controls the fully discrete error with respect to the MOL solution.

\subsection{Semilinear advection-reaction test problem}
\label{subsec:numerics_SL_advection}
As a first PDE test, we consider a PDE with linear advection and nonlinear reaction
\begin{equation}
\label{eq:advection-reaction_PDE}
    \begin{aligned}
        u_t + u_x & = 10(u-1)(2-u), \quad 0\leq x \leq 1,\quad 0 \leq t \leq t_\mathrm{f} , \\
        u(x,0) & = u_{0}(x), \\
        u(0,t) & = g_{0}(t).
    \end{aligned}
\end{equation}
This problem models a logistic growth (shifted to occur for $1<u<2$) of a profile moving to the right with speed 1. It is a variant of the benchmark equation $u_t +u_x = u^2$, in which Hundsdorfer and Verwer \cite{hundsdorfer2003numerical} demonstrated order reduction. 
Unlike test problems with manufactured solutions, problem \cref{eq:advection-reaction_PDE} is devoid of a forcing $f(x,t)$, so its solution exhibits its natural behavior, and order reduction effects are expected to predominantly emanate from inflow boundaries \cite{rosales2024spatial}. The problem fits the structure \cref{eq:ODE} because the linear $u_x$ is the stiffest term, while the nonlinear reaction term is less stiff. 

Solutions of \cref{eq:advection-reaction_PDE} can be constructed analytically. We choose the function
\begin{equation}
\label{eq:advection-reaction_solution}
U(x,t) = \frac{198 + 4\exp(10t) + (2\exp(10t)-1)\sin(4\pi(x-t))}
               {198 + 2\exp(10t) + (\exp(10t)-1)\sin(4\pi(x-t))},
\end{equation}
and set the initial condition $u_0(x) = U(x,0)$ and inflow boundary conditions $g_0(t) = U(0,t)$, yielding the true solution $u(x,t) = U(x,t)$. This solution transitions from $u\approx 1$ to $u\approx 2$ on a time scale $0\le t\le 1$, with sizable temporal rates of change and spatial variability occurring around $t\approx 0.5$. Therefore, for the convergence tests below we use $\tf = 0.5$.

On a uniform grid with spacing $\Delta x = 10^{-4}$, the spatial derivative is discretized using the fifth-order finite-difference scheme denoted by $(4,4,6\text{-}6\text{-}4,4,4)$ in \cite{carpenter1993stability}. This yields a spatial semi-discretization error of $1.4 \times 10^{-13}$, estimated as described in \cref{subsec:PDE_methodology}.


\begin{figure}[htb]
	\begin{minipage}[b]{.32\textwidth}
		\includegraphics[width=\textwidth]{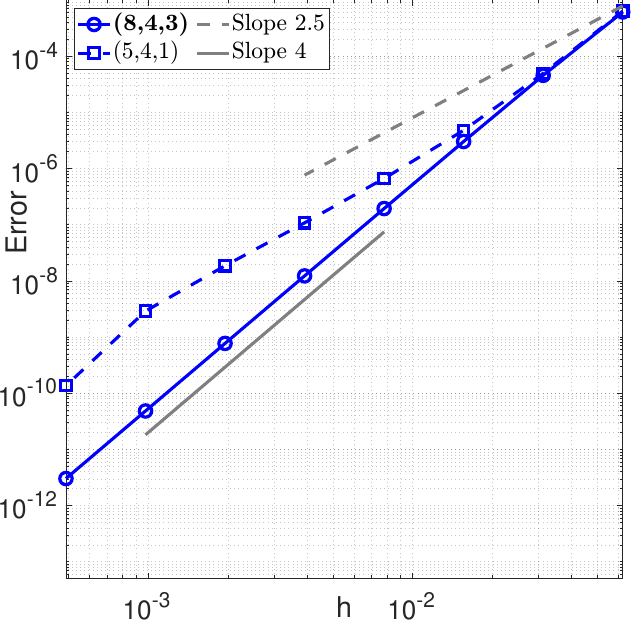}
	\end{minipage}
    \hfill
	\begin{minipage}[b]{.32\textwidth}
	\includegraphics[width=\textwidth]{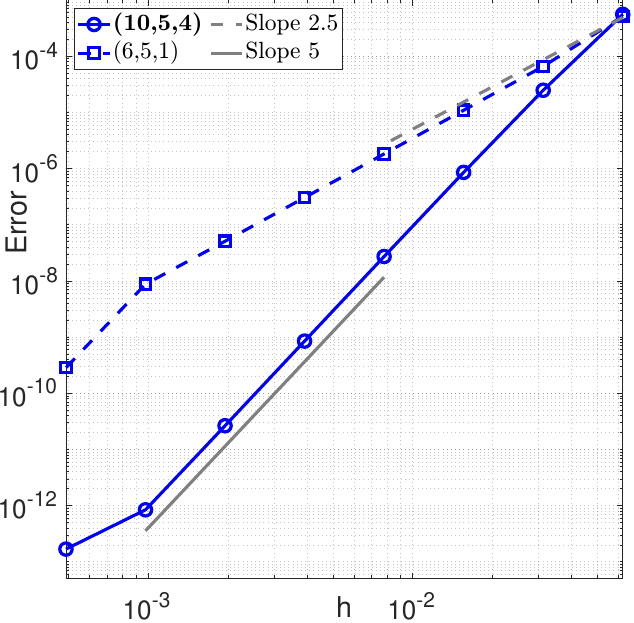}
	\end{minipage}
    \hfill
    \begin{minipage}[b]{.32\textwidth}
	\includegraphics[width=\textwidth]{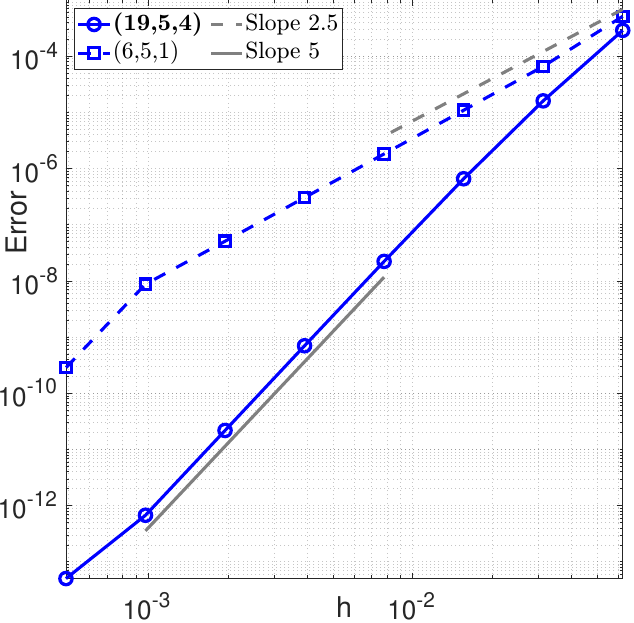}
	\end{minipage}
	\caption{Convergence in the scaled $\ell^2$-norm for the semilinear advection-reaction equation \cref{eq:advection-reaction_PDE}. In each panel, a $p$-th order method satisfying the semilinear order conditions up to $\psl = p-1$ is compared against a classical reference method of the same order $p$ but with $\psl = 1$. The new methods consistently exhibit the convergence orders established by the semilinear theory.}
	\label{fig:ErrConvg_SemiLinAdvReac}
\end{figure}

\Cref{fig:ErrConvg_SemiLinAdvReac} shows the convergence curves of the temporal errors, comparing the new methods against classical reference methods. In each panel, we can see that the reference method ((5,4,1) respectively (6,5,1)) does exhibit order reduction, reducing the error curve to a slope of 2.5 in the stiff regime (here: $h$ larger than $\approx 10^{-3}$). This error rate of 2.5 is to be expected, as follows. By asymptotic theory \cite{rosales2024spatial} one expects a numerical boundary layer of thickness $O(h)$ to arise in which the error is $O(h^2)$. For high order methods, this layer dominates the $L^2$ error, yielding
\begin{equation*}
\left(\int_0^1 ((U-u)(x,0.5))^2 \ud{x}\right)^{\!\frac{1}{2}}
= O\left((h \times (h^2)^2)^\frac{1}{2}\right) = O(h^{2.5}).
\end{equation*}
In contrast, the new methods with high semilinear order $\psl \ge p-1$ converge at the full order $p$ (until they plateau at the spatial approximation error). Convergence tests were conducted for all available methods, including those with $\psl = p$. The outcome is that $\psl = p-1$ generally suffices to achieve the full convergence order $p$; hence only such methods are displayed here.

\subsection{Allen--Cahn equation in two space dimensions}
\label{subsec:Allen-Cahn}
We now study the numerical methods on an initial--boundary-value problem in two spatial dimensions. We consider the Allen--Cahn equation
\begin{equation}\label{eq:Allen_Cahn_2D}
    u_t = \alpha \nabla^2u + \beta(u-u^3) + \psi(x_1,x_2,t), \quad  \ 0\leq x_1,x_2\leq 1, \quad 0 \leq t \leq 0.5,
\end{equation}
with parameter choices $\alpha = 0.1$ and $\beta = 3$. The initial conditions, the time-dependent Dirichlet boundary conditions, and the source term $\psi(x_1,x_2,t)$ are such that the manufactured solution is $u(x_1,x_2,t) = 2 + \sin(2\pi(x_1-t)) \cos(3\pi(x_2-t))$. Equation \cref{eq:Allen_Cahn_2D} is a nonlinear reaction-diffusion PDE, which is used, for example, to model various types of multiphase state systems. Because the differential operator in \cref{eq:Allen_Cahn_2D} is linear, similar convergence behavior of the various time-stepping methods as found in \cref{subsec:numerics_SL_advection} may be expected.

The spatial approximation is conducted via spectral methods on a 2D tensor product grid, using $25$ Chebyshev grid points in each dimension. This results in a spatial approximation error of $6.04 \times 10^{-15}$ in the scaled $\ell^2$-norm.

\begin{figure}[ht]
	\begin{minipage}[b]{.46\textwidth}
		\includegraphics[width=\textwidth]{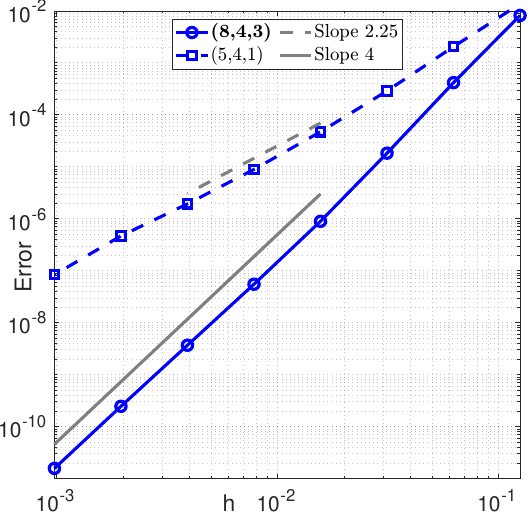}
	\end{minipage}
    \hspace{1em}
	\begin{minipage}[b]{.46\textwidth}
	\includegraphics[width=\textwidth]{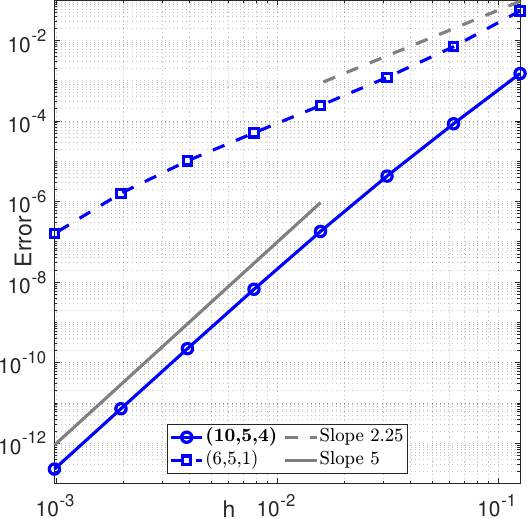}
	\end{minipage}
	\caption{Convergence in the scaled $\ell^2$-norm for the 2D Allen--Cahn equation \cref{eq:Allen_Cahn_2D}. Left panel: fourth order methods with $\psl = 1$ and $\psl = p-1$. Right: fifth order methods with $\psl = 1$ and $\psl = p-1$. The methods exhibit the convergence orders established by the semilinear theory.}
	\label{fig:ErrConvg_AllenCahn2D}
\end{figure}

We now time-step the resulting systems until $t=0.5$ via the various methods, and compute the errors in the scaled $\ell^2$-norm in space.
\Cref{fig:ErrConvg_AllenCahn2D} displays the convergence results of the new methods in comparison against classical reference methods. In line with \cite{rosales2024spatial} and the asymptotic arguments in \cref{subsec:numerics_SL_advection}, the classical methods exhibit a convergence rate of 2.25, despite having formal orders of four and above. The highest spatial derivative is degree two, so numerical boundary layers of thickness $O(h^\frac{1}{2})$ arise in which the error is $O(h^2)$, thus leading to an approximate $L^2$ error of $O((h^\frac{1}{2} \times (h^2)^2)^\frac{1}{2}) = O(h^{2.25})$.

In contrast, the new methods with $\psl = p-1$ exhibit convergence rates (very close to) $p$ for all tested methods, confirming that the conditions and methods presented herein can effectively alleviate order reduction also for problems in higher spatial dimensions.

\subsection{Viscous Burgers' equation}
\label{subsec:viscous_Burgers}
As a stiff nonlinear PDE problem, we study the viscous Burgers' equation,
\begin{equation}\label{eq:Vis_Burgers}
u_t + u u_x = \nu u_{xx}+\psi \ \ \text{for} \ (x,t)\in (0,1) \times (0,1],
\end{equation}
with a viscosity constant $\nu = 0.1$, which is large enough to ensure that the computational grid used accurately captures the spatial scales of the solution, but small enough to ensure that the nonlinear term is relevant for the observed approximation errors.
The initial conditions, the time-dependent Dirichlet boundary conditions, and the source term $\psi(x,t)$ are such that the manufactured solution is $u(x,t) = \cos(2+10t) \sin(0.2+20x)$.
A uniform grid with $\Delta x = 10^{-3}$ is used for the spatial discretization, with both spatial derivatives approximated using sixth-order linear centered finite differences (one-sided near the boundaries). This yields a spatial error of $2.87 \times 10^{-13}$ (found in the same way as in the above tests), thus ensuring that the temporal error is isolated in the convergence study.

\begin{figure}[ht]
	\begin{minipage}[b]{.46\textwidth}
	\includegraphics[width=\textwidth]{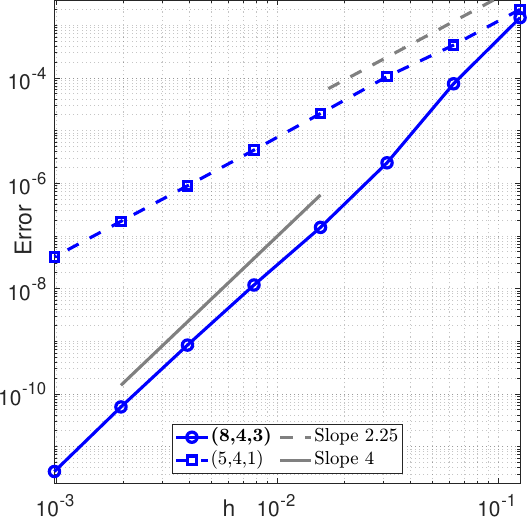}
	\end{minipage}
    \hspace{1em}
	\begin{minipage}[b]{.46\textwidth}
	\includegraphics[width=\textwidth]{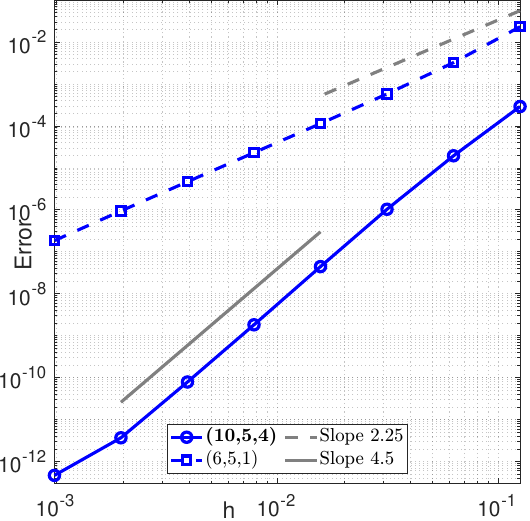}
	\end{minipage}
	\caption{Convergence in the scaled $\ell^2$-norm for the viscous Burgers' equation \cref{eq:Vis_Burgers}. Left panel: fourth order methods with different semilinear orders, in which the new method with high semilinear order exhibits convergence at order $p$. Right panel: fifth order methods with different semilinear orders. Here the new method exhibits convergence order 4.5, which is less than $p$ but significantly better than the order 2.25 of the reference method.}
	\label{fig:ErrConvg_VisBurgers}
\end{figure}

\Cref{fig:ErrConvg_VisBurgers} shows the error convergence results over the range $10^{-3} \le h \le 10^{-1}$, which represents natural step size choices for the given test problem and spatial discretization. Unlike the previous PDE tests, the problem here is not covered by the theory in \cref{Sec:BackgroundSemilinearConditions} because we are not in the regime where the step size $h$ is small relative to the Lipschitz constant of the nonlinear term, which scales like the number of spatial grid points.

The results illustrate that all classical methods converge with an order of $2.25$ in the scaled $\ell^2$-norm (for the same argument as in \cref{subsec:Allen-Cahn}) and hence exhibit order reduction. The new methods, however, converge at orders 4.0 for the (8,4,3) method and 4.5 for the (10,5,4) method. These observations are consistent with related numerical results \cite{biswas2023design}, revealing that weak stage order (in \cite{biswas2023design}) and/or semilinear order conditions can manifest in significant accuracy benefits, even for test cases that lie outside the convergence theory (albeit not necessarily at the full order $p$).

\subsection{Van der Pol equation}
\label{subsec:van_der_Pol}
As an example of a stiff, truly nonlinear ODE not of the semilinear form \cref{eq:ODE}, we consider the Van der Pol test problem as defined in \cite[Chapter~VI.3, p.~406]{hairer1996solving}
\begin{equation} \label{eq:VdP_1}
    \begin{split}
        y'_1 &= y_2,\\
        \epsilon y'_2 &= (1 - y_1^2) y_2 - y_1,
    \end{split}
\end{equation}
with $t \in (0,0.5]$. The initial condition is given by
\begin{equation*}
    (y_1(0), y_2(0)) = \left(2, -\tfrac{2}{3} + \tfrac{10}{81}\epsilon - \tfrac{292}{2187}\epsilon^2 - \tfrac{1814}{19683}\epsilon^3\right).
\end{equation*}
We consider the value $\epsilon = 10^{-3}$, representing a mildly stiff regime, and $\epsilon = 10^{-6}$, corresponding to a strongly stiff regime.

\begin{figure}[htb]
	\begin{minipage}[b]{.46\textwidth}
		\includegraphics[width=\textwidth]{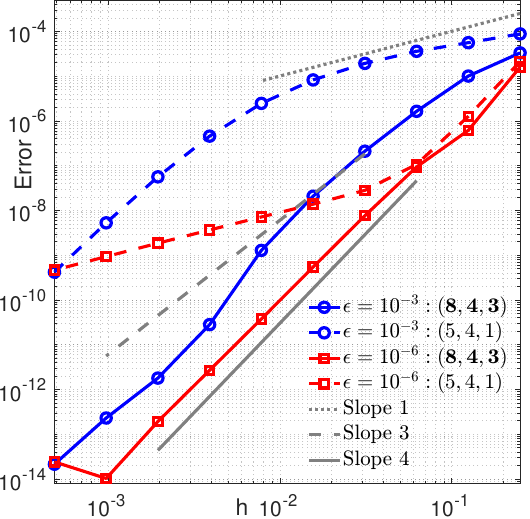}
	\end{minipage}
    \hspace{1em}
 	\begin{minipage}[b]{.46\textwidth}
		\includegraphics[width=\textwidth]{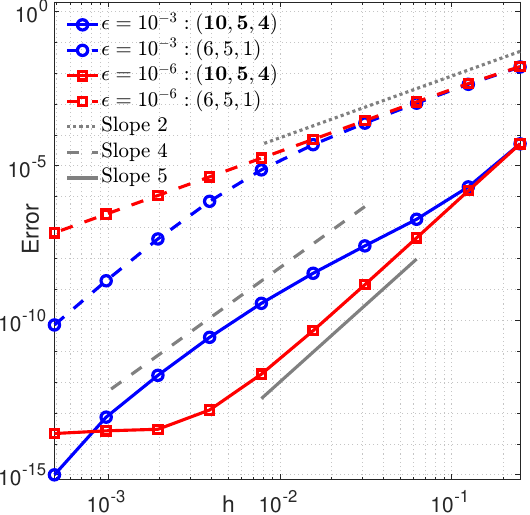}
	\end{minipage}
	\caption{Error convergence for the Van der Pol equation \cref{eq:VdP_1}. Left panel: fourth order methods with different semilinear orders. Right panel: fifth order methods with different semilinear orders. Even though this test lies fully outside the semilinear theory, the new methods with high semilinear order nevertheless alleviate order reduction effects.}
	\label{fig:ErrConvg_VdP_eqn}
\end{figure}

The reference solution at the final time is computed using MATLAB's \verb|ode45| solver with absolute and relative tolerances $5 \times 10^{-14}$. \Cref{fig:ErrConvg_VdP_eqn} shows the numerical convergence rates of solutions obtained through our proposed methods and classical methods. As the problem lies fully outside the class of semilinear problems, our proposed methods are not guaranteed to eliminate order reduction; and the numerical results confirm that in fact, order reduction effects are still present. At the same time, the new methods do consistently enhance the convergence rates relative to the reference schemes, resulting in significantly smaller approximation errors.


\section{Numerical results: Adaptive time-stepping}
\label{sec:adaptive_time_stepping}
In this section we show how important order reduction, and its mitigation, are for error estimation used in adaptive time-stepping. We consider the semilinear advection-reaction test problem \cref{eq:advection-reaction_PDE}, with final time $\tf = 1$, which is a non-forced PDE whose solution transitions through a range of scales of spatio-temporal variability (thus justifying adaptive time-stepping).

The adaptive time-stepping procedure used is as follows. Let $U^n$ denote the numerical solution of the ODE system at time $t_n$. Given a step size $h_n$, the standard adaptive step-size strategy based on local error estimation \cite[Section II.4]{hairer1993solving} computes two approximate solutions, $U^{n+1}$ and $\widehat{U}^{n+1}$, at time $t_{n+1}=t_n+h_n$. The discrepancy between these approximations is used to define a local error indicator
\begin{equation}\label{eq:loc_err_control}
\mathit{err}
=
\left(
\frac{1}{\nvar}
\sum_{i=1}^{\nvar}
\left(
\frac{
U_i^{n+1}-\widehat{U}_i^{n+1}
}{
\mathit{Atol}
+
\max\left\{
\left|U_i^{n+1}\right|,
\left|\widehat{U}_i^{n+1}\right|
\right\}
\mathit{Rtol}
}
\right)^2
\right)^{1/2}.
\end{equation}
A step is accepted when $\mathit{err}\leq1$, in which case the solution is updated using the higher-order approximation $U^{n+1}$, and the step size for the subsequent step is chosen as
\begin{equation}\label{eq:modified_control_step}
h_{\mathrm{new}}
=
\alpha
\left(
\frac{1}{\mathit{err}}
\right)^{1/(\min(p,\widehat{p})+1)}
h_n.
\end{equation}
Here, $\alpha<1$ denotes a safety factor introduced to reduce the likelihood of step rejections, and $\widehat{p}$ denotes the order of the embedded method. Throughout this work, we use $\alpha=0.9$, which is one of the choices suggested in \cite{hairer1993solving}. When $\mathit{err} > 1$, the step is rejected and recomputed using the step size $h_{\mathrm{new}}$ prescribed by \cref{eq:modified_control_step}.
%

We choose a given absolute tolerance $\mathit{Atol}$ and (for simplicity) a relative tolerance $\mathit{Rtol} = 0$. 
Because computational efficiency is a key objective in adaptive time-stepping, we here solve all nonlinear systems up to a Newton tolerance of $10^{-2}\times\mathit{Atol}$. 

For the interpretation of the local errors incurred by the scheme and the embedding when solving an ODE $y' = f(y)$, we define a local reference solution
\begin{equation}
u_{\mathrm{ref}}(t_{n+1}) := y(t_{n+1}),
\end{equation}
where $y$ is the (exact) solution of the ODE with initial value $y(t_n) = U^{n}$. In other words, $u_{\mathrm{ref}}(t_{n+1})$ reveals what a numerical approximation $U^{n}$ at time $t_{n}$ would turn into when advanced without error to time $t_{n+1}$. For the numerical results, $u_{\mathrm{ref}}(t_{n+1})$ is computed almost up to machine precision.




Using the above procedure, we have at each step the scheme's (high order) approximation $U^n$ and its embedding's approximation $\widehat{U}^n$. We therefore have three errors: (i)~the error estimator $\|U^n - \widehat{U}^n\|$, (ii)~the method's local error $\|U^n - u_\mathrm{ref}(t_n)\|$, and (iii)~the embedding's local error $\|\widehat{U}^n - u_\mathrm{ref}(t_n)\|$. An effective adaptive time-stepping routine generally relies on the assumption that the method's error is smaller than the embedding's error, and thus the error estimator can be used as a proxy for the latter.

\begin{figure}[htbp]
    \begin{subfigure}[t]{\textwidth}
        \includegraphics[width=.49\textwidth]{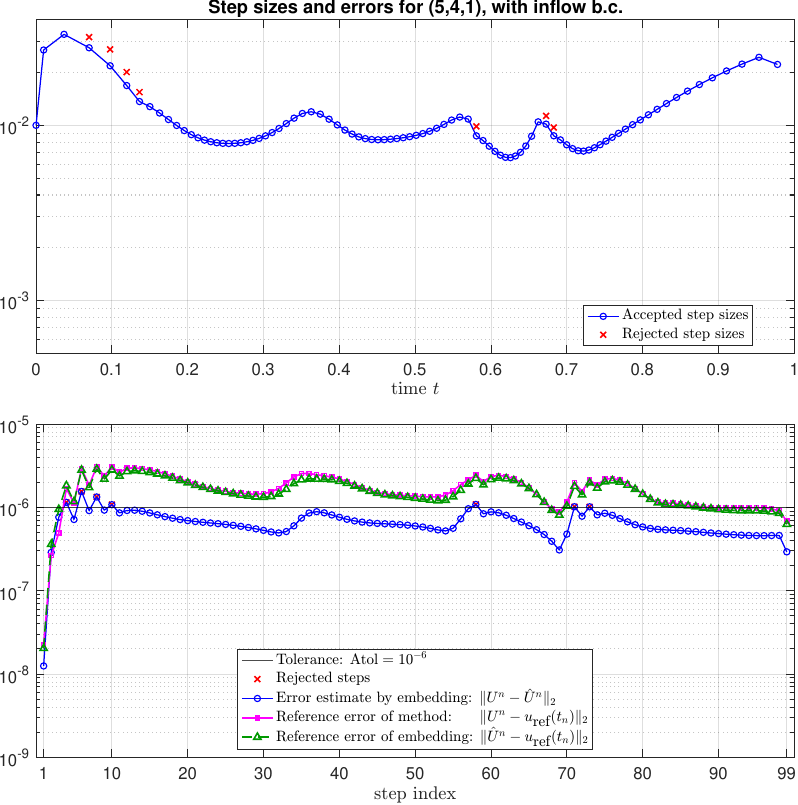}\hfill
        \includegraphics[width=.49\textwidth]{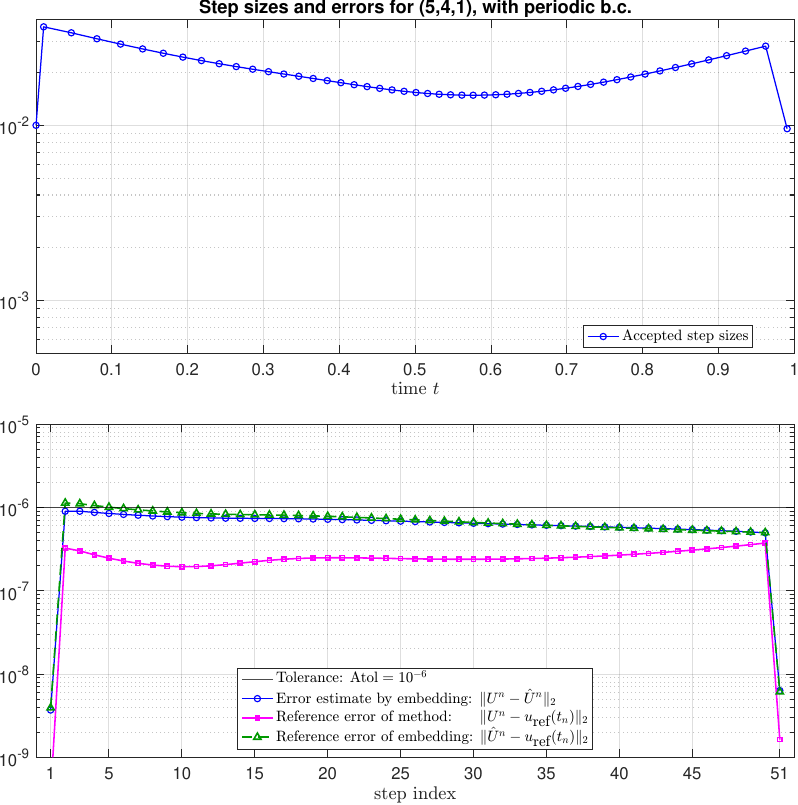}
        \caption{Step size (top row) and local error (bottom row) profiles for the reference (5,4,1) method.
        With periodic boundary conditions (right column), there is no order reduction and the step size appropriately adapts to keep the local error within the tolerance. With inflow boundary conditions (left column), the method experiences order reduction, underestimates the local truncation error, and takes nearly twice as many steps.}
    \end{subfigure}
    
    \vspace{.5em}
    \begin{subfigure}[t]{\textwidth}
        \includegraphics[width=.49\textwidth]{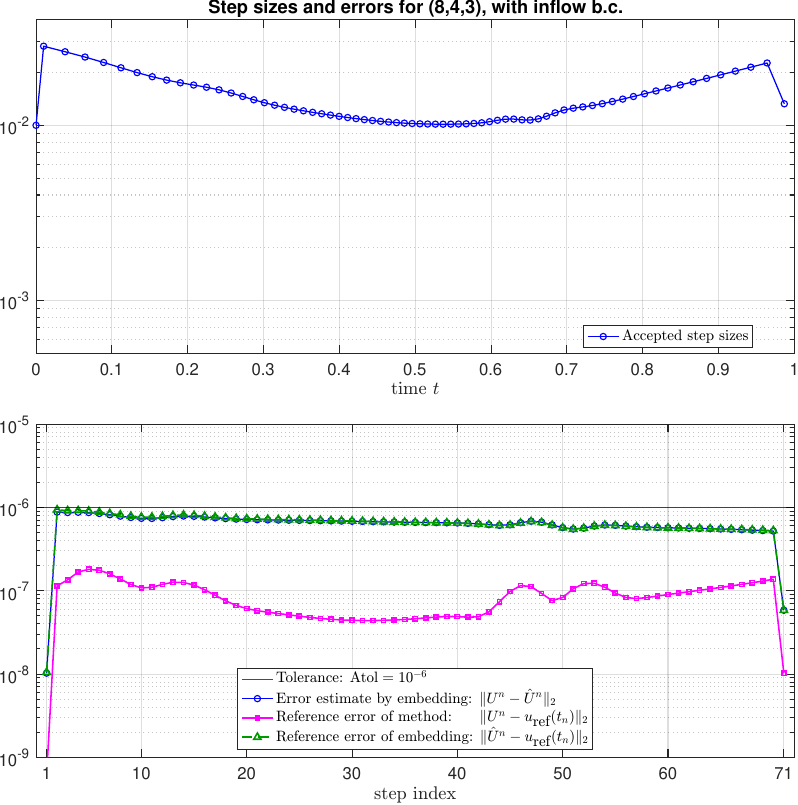}\hfill
        \includegraphics[width=.49\textwidth]{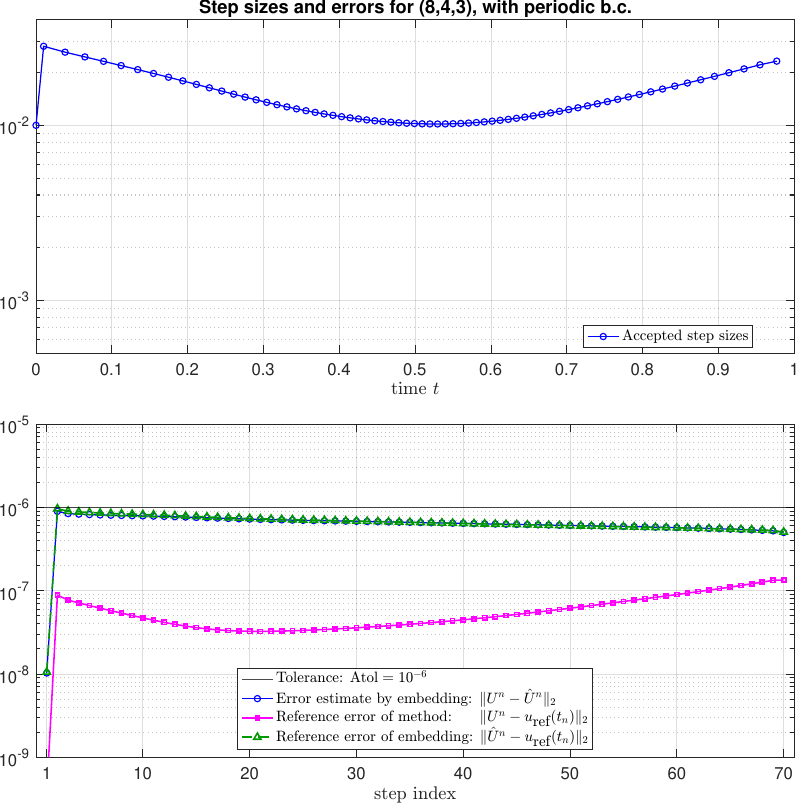}
        \caption{Step size (top row) and local error (bottom row) profiles for the new (8,4,3) method.
        With either inflow (left column) or periodic (right column) boundary conditions, there is no order reduction and the step size appropriately adapts to keep the local error within the tolerance.}
    \end{subfigure}
    \caption{Step size adaptivity comparison of the fourth order methods applied to the semilinear advection-reaction PDE \cref{eq:advection-reaction_PDE}.}
    \label{fig:adaptive_pde_4th_order_stepsizes}
\end{figure}

First we study fourth order methods, considering the new (8,4,3) method together with its embedding, and comparing it against the reference method (5,4,1) with its embedding. We choose $\mathit{Atol} = 10^{-6}$. The numerical results are visualized in the following sequence of figures. For each of the two methods, \cref{fig:adaptive_pde_4th_order_stepsizes} shows the step size $h_n$ vs.\ time $t_n$ and (immediately below it) the relevant errors defined in the previous paragraph. The weighted $\ell^2$-norm is used for all spatial errors.

Because we expect that the reference method may exhibit order reduction near inflow boundaries, we simulate the PDE \cref{eq:advection-reaction_PDE} with the solution \cref{eq:advection-reaction_solution}, once with inflow and then with periodic boundary conditions. The results in \cref{fig:adaptive_pde_4th_order_stepsizes} clearly confirm our expectations: (8,4,3) with both types of boundary conditions and (5,4,1) with periodic boundary conditions exhibit qualitatively similar results with local error magnitudes as expected. In contrast, the (5,4,1) with inflow boundary conditions behaves fundamentally inferior: the error estimator is a poor estimator of the local error (of both the high-order scheme and the embedding). Hence, when step sizes are chosen so that the estimator satisfies the tolerance, the actual errors incurred fail to meet the tolerance. Even then, it incurs more accepted and rejected steps than the other situations. This indicates that the reference method can exhibit a poor error estimation, with concerning effects on adaptive time-stepping, while the new method with high semilinear order performs as desired.


\begin{figure}[htbp]
    \includegraphics[width=.49\textwidth]{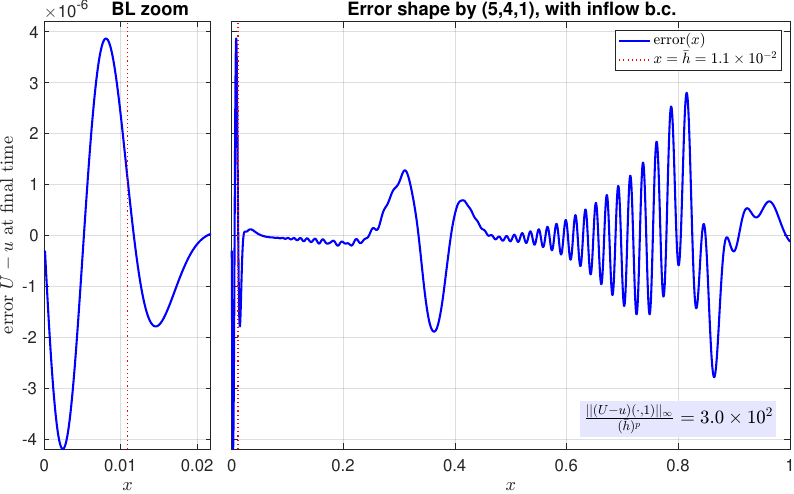}\hfill
    \includegraphics[width=.49\textwidth]{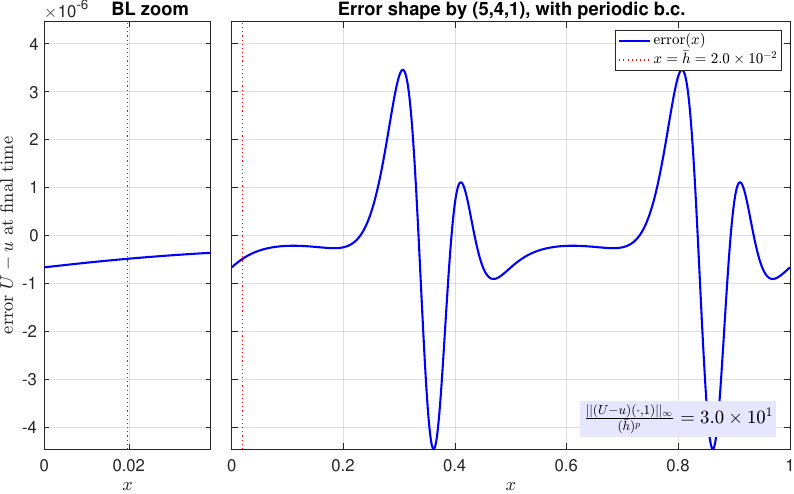}\\[.5em]
    \includegraphics[width=.49\textwidth]{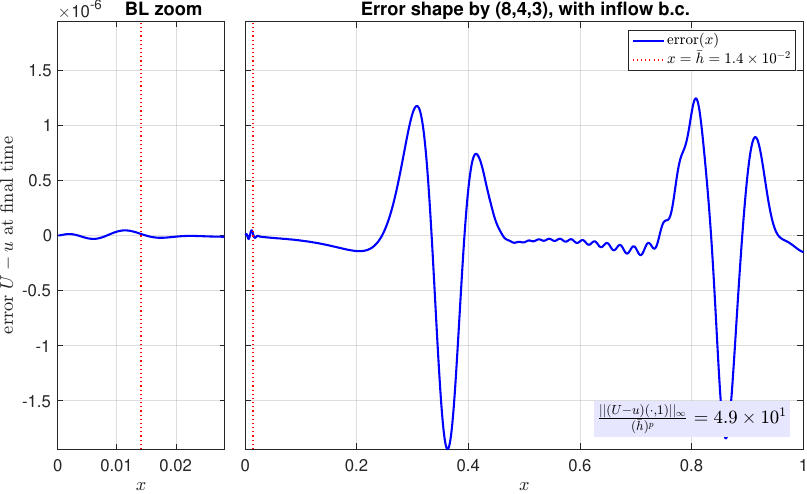}\hfill
    \includegraphics[width=.49\textwidth]{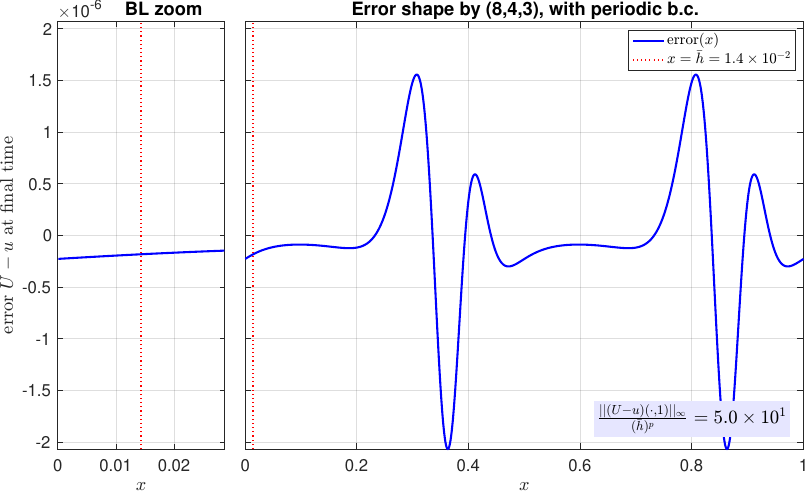}
    \caption{Spatial errors $(U-u)(\cdot,1)$ at the final time for fourth order methods applied to the semilinear advection-reaction PDE \cref{eq:advection-reaction_PDE}, with inflow (left column) and periodic (right column) boundary conditions. Asymptotic analysis \cite{rosales2024spatial} predicts a numerical boundary layer near inflow boundaries; hence, each plot shows a zoom near the left boundary, with the dashed red line at the position of the average step size $\bar{h}$. Indeed, the reference (5,4,1) method (top row) exhibits such an inflow boundary layer (plus, some error artifacts that moved into the domain). In contrast, on a periodic domain such effects are absent. As expected, the (8,4,3) method (bottom row) shows no artifacts in the periodic case either. But importantly, even with inflow boundary conditions, boundary layer effects are insignificant to the error. The error normalized by $\bar{h}^p$
    (light blue box in each panel) also reveals the order reduction effects of (5,4,1) with inflow boundaries.}
    \label{fig:adaptive_pde_4th_order_error_shapes}
\end{figure}

We now provide evidence that indeed an order reduction effect, in the form of a numerical boundary layer (cf.~\cite{rosales2024spatial}), is responsible for the observed behavior. To that end, \cref{fig:adaptive_pde_4th_order_error_shapes} shows the global error at $\tf = 1$ as a function of $x$, for each of the two methods and types of boundary conditions. Each graph also has a zoom (of thickness $O(\bar{h})$ where $\bar{h}$ is the mean step size) near the left boundary. One can clearly see that the reference (5,4,1) method produces a numerical boundary layer near the inflow boundary plus, some artifacts resulting from boundary errors having moved into the domain. In contrast, the absence of inflow boundaries or the use of the high semilinear order (8,4,3) method results in the absence of any boundary layers.

Each panel also shows an error magnitude (measured in the max norm), divided by the mean step size to the power $p$. If the simulations were convergent at the method's order $p$, then the resulting quotient values should be of the same order of magnitude. However, in the case of order reduction, that scaled error is an order of magnitude worse than in situations devoid of order reduction.


These results provide evidence that in the presence of order reduction, the (5,4,1) scheme produces essentially the same error as its embedding (in \cref{fig:adaptive_pde_4th_order_stepsizes}, the magenta and green error curves almost coincide while the blue curve is notably lower). Hence, the resulting numerical solution is not a good approximation of the true solution due to boundary layers---but this fact is invisible to the error estimator because both the method and its embedding produce similar errors. In contrast, when order reduction is avoided, such as via methods satisfying the semilinear order conditions, the error estimators and the associated adaptive time-stepping exhibit the desired behavior.

\begin{figure}[htbp]
    \centering
	\begin{minipage}[b]{.47\textwidth}
		\includegraphics[width=\textwidth]{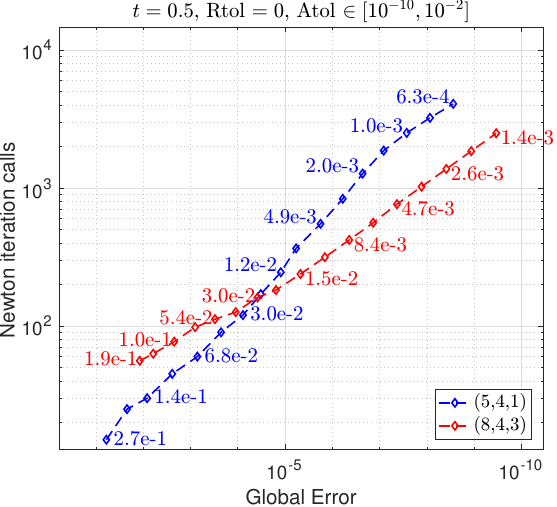}
	\end{minipage}
    \hspace{1em}
	\begin{minipage}[b]{.47\textwidth}
	\includegraphics[width=\textwidth]{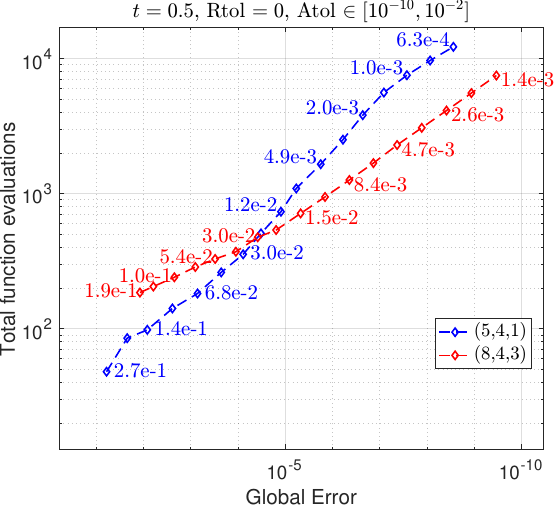}
	\end{minipage}
    \caption{Work-precision diagram for the $(5,4,1)$ and $(8,4,3)$ methods. For a wide range of tolerances, the arithmetic mean step size $\bar{h}$ (text near markers), the global error (horizontal axis), and a measure of total work (vertical axes) are reported. Left measures work via the total number of Newton solves, while right counts the total number of right hand side evaluations. For all data points except for the rightmost three blue points in each graph, the problem is in the stiff regime. While for loose tolerances, the lower number of stages renders the $(5,4,1)$ method more efficient, the increasing superiority of the $(8,4,3)$ method for tighter tolerances is apparent.}
    \label{Fig:SemiLinAdvRe_InflowBCs_DIRK843_WorkPrecision}
\end{figure}

With these structural insights on the manifestations and implications on error estimation of order reduction for initial-boundary-value problems, we now investigate the comparative performance of the new (8,4,3) method vs.\ the reference (5,4,1) method. For the same advection-reaction test problem (here with inflow boundary conditions only), we consider the work-precision diagram incurred by adaptive time-stepping, displayed in \cref{Fig:SemiLinAdvRe_InflowBCs_DIRK843_WorkPrecision}. For a sequence of prescribed tolerances ranging in $\mathit{Atol} \in [10^{-10},10^{-2}]$, each method is run up to $t_\mathrm{f} = 0.5$, and the following information is recorded: the global error and the cost, yielding a point in the log-log diagram, as well as the mean step size $\bar{h}$ used, shown as text next to every other point. In the left panel, cost is measured as the total number of Newton solves (which is equal to the method's number of implicit stages times the number of step sizes, including rejected steps). In the right panel, the total number of evaluations of the ODE's right hand side is counted.


This work-precision comparison is important because it justifies under which circumstances the increased number of stages required to satisfy the semilinear order conditions indeed pays off. The results in \cref{Fig:SemiLinAdvRe_InflowBCs_DIRK843_WorkPrecision} show that the data points for each method lie approximately on lines, but with notably different slopes (note that the rightmost three points of the blue curves exhibit a different slope because there the problem is not stiff anymore, given the choice of $\Delta x$). As a consequence, for low accuracies, the reference method is preferable. However, for higher accuracies, the situation reverts and the new method with high semilinear order is a notably more effective choice.

\begin{figure}[htbp]
    \begin{subfigure}[t]{\textwidth}
        \includegraphics[width=.49\textwidth]{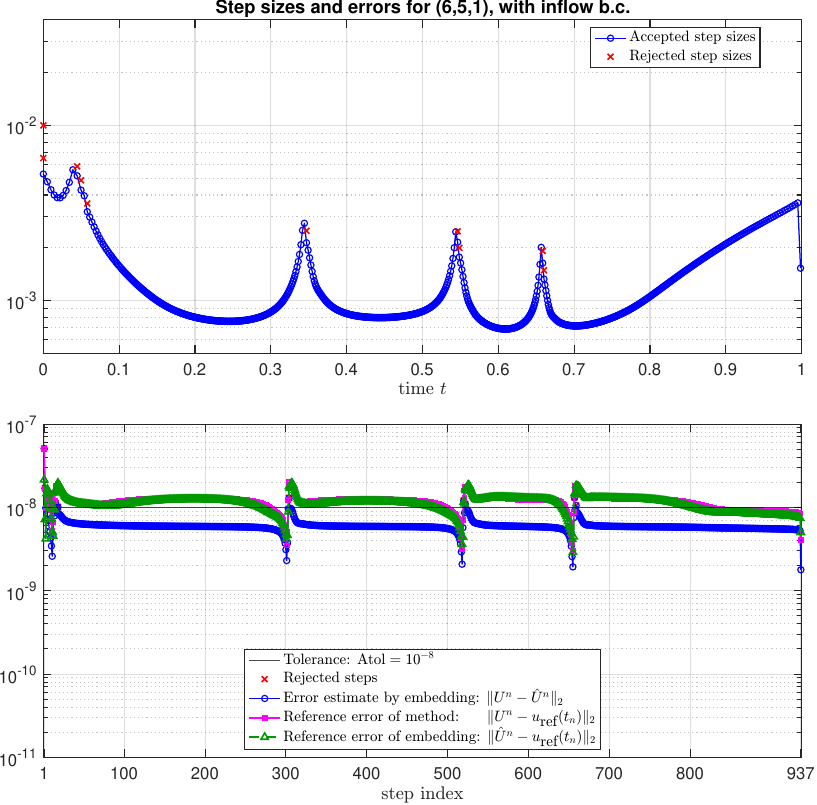}\hfill
        \includegraphics[width=.49\textwidth]{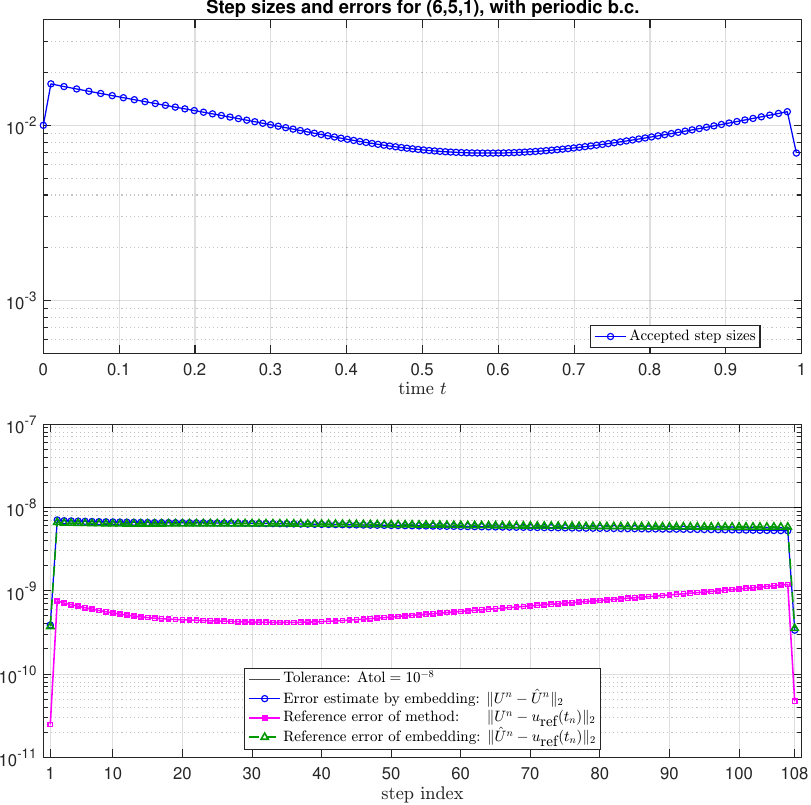}
        \caption{Step size (top row) and local error (bottom row) profiles for the reference (6,5,1) method. With periodic boundary conditions (right column), there is no order reduction and the step size appropriately adapts to keep the local error within the tolerance. With inflow boundary conditions (left column), the method experiences order reduction, underestimates the local truncation error, and takes nearly nine times as many steps.}
    \end{subfigure}
    \vspace{.5em}
    \begin{subfigure}[t]{\textwidth}
        \includegraphics[width=.49\textwidth]{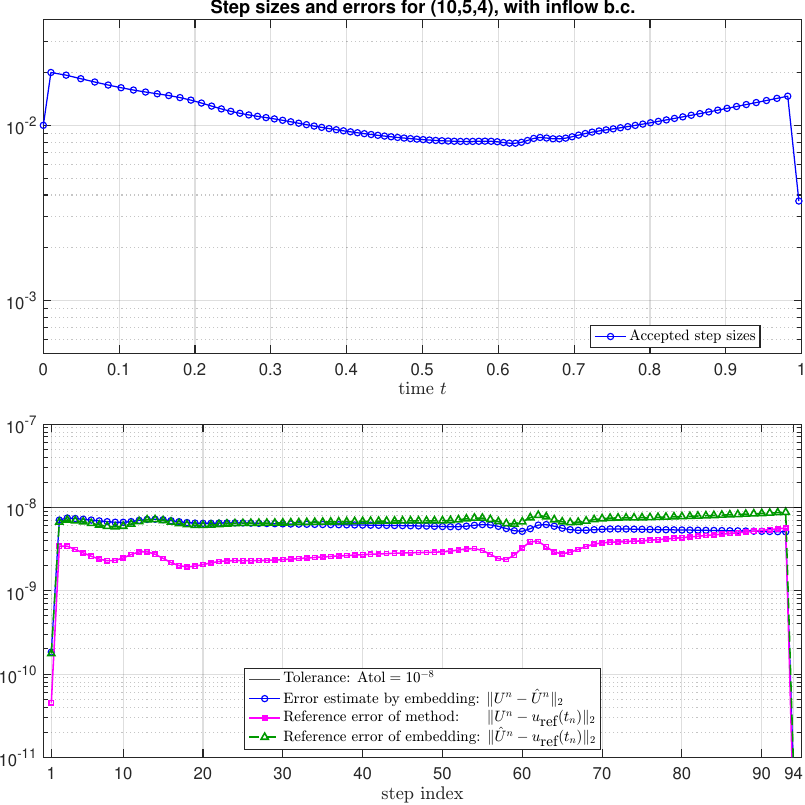}\hfill
        \includegraphics[width=.49\textwidth]{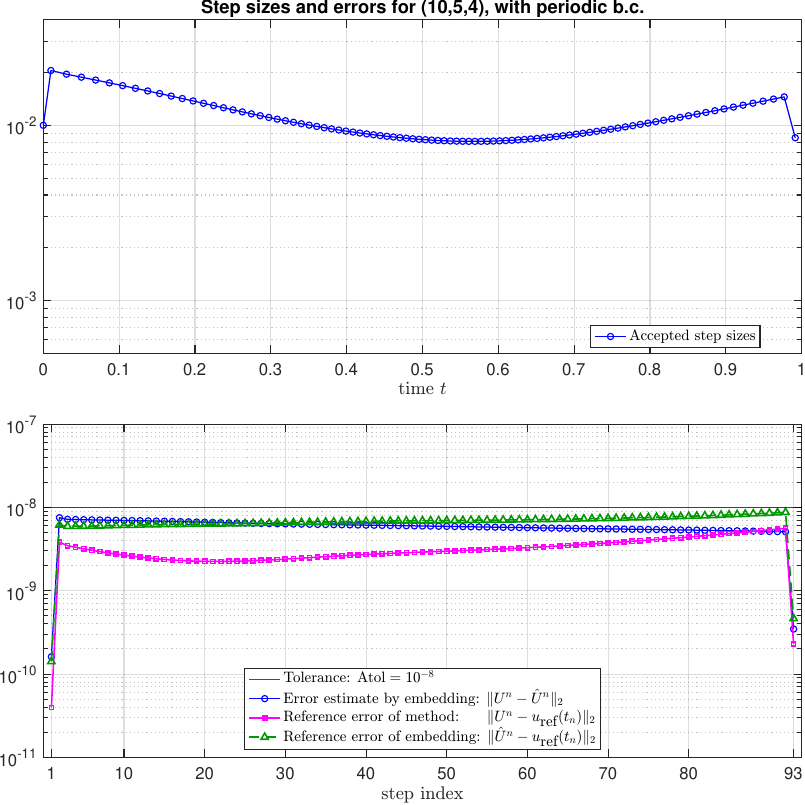}
        \caption{Step size (top row) and local error (bottom row) profiles for the new (10,5,4) method.
        With either inflow (left column) or periodic (right column) boundary conditions, there is no order reduction and the step size appropriately adapts to keep the local error within the tolerance.}
    \end{subfigure}
    \caption{Step size adaptivity comparison of the fifth order methods applied to the semilinear advection-reaction PDE \cref{eq:advection-reaction_PDE}.}
    \label{fig:adaptive_pde_5th_order_stepsizes}
\end{figure}

\begin{figure}[htbp]
    \includegraphics[width=.49\textwidth]{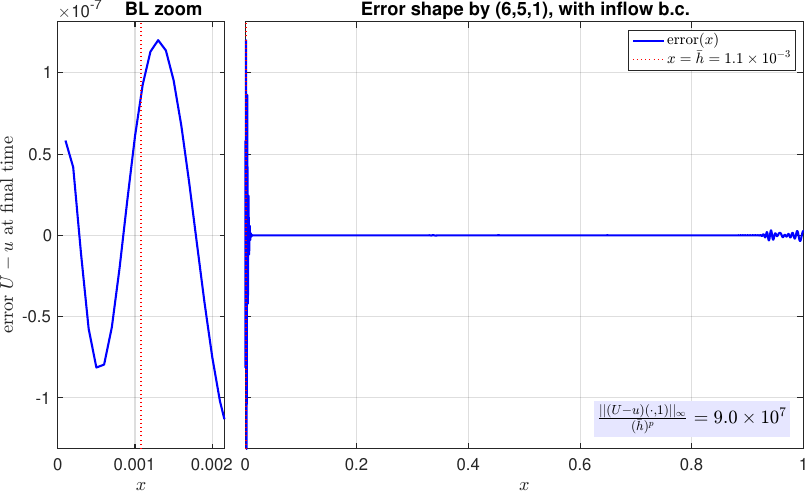}\hfill
    \includegraphics[width=.49\textwidth]{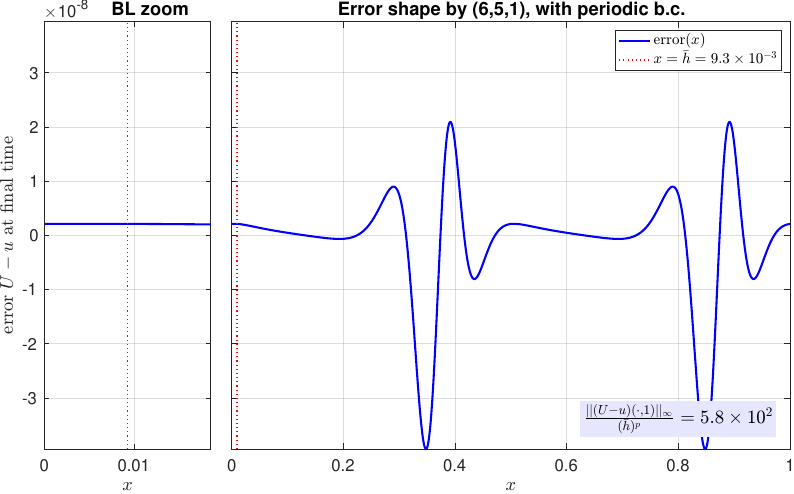}\\[.5em]
    \includegraphics[width=.49\textwidth]{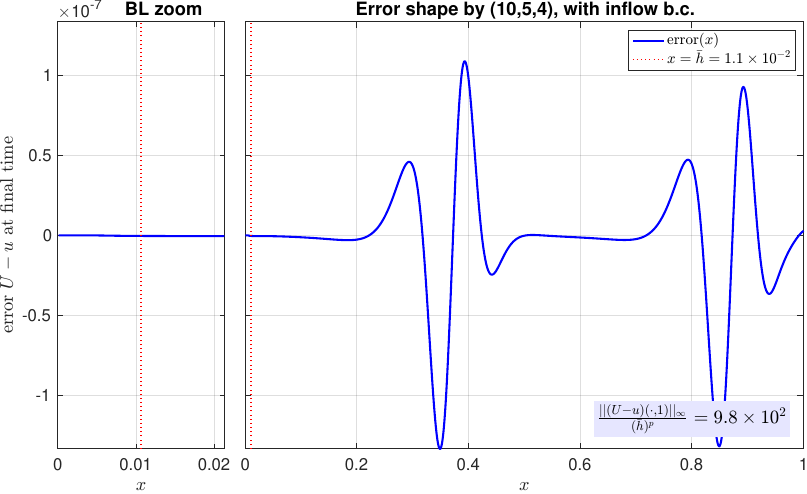}\hfill
    \includegraphics[width=.49\textwidth]{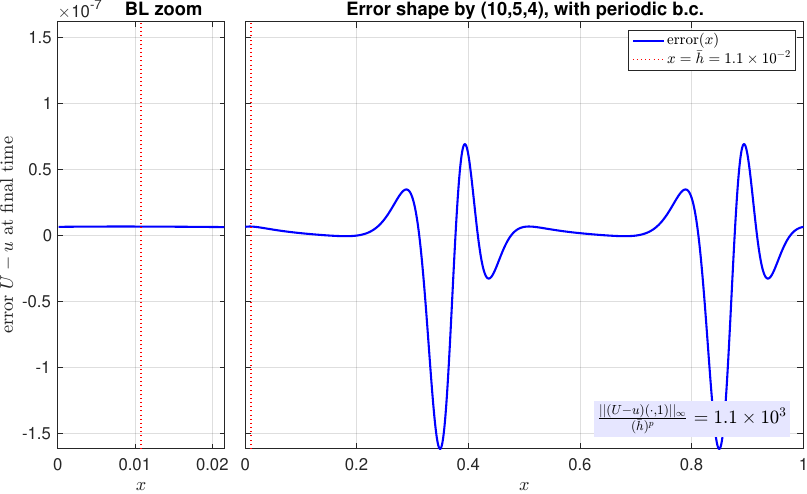}
    \caption{Spatial errors $(U-u)(\cdot,1)$ at final time for fifth order methods, applied to the semilinear advection-reaction PDE \cref{eq:advection-reaction_PDE}, with inflow (left column) and periodic (right column) boundary conditions. The same plotting conventions as in \cref{fig:adaptive_pde_4th_order_error_shapes} apply. The (6,5,1) method exhibits a significant boundary layer near the inflow boundary, resulting in a scaled reference error that is 4 orders of magnitude worse than the other three situations without order reduction.}
    \label{fig:adaptive_pde_5th_order_error_shapes}
\end{figure}


The analogous sequence of results is provided for fifth order methods as well, comparing the new (10,5,4) method with its embedding against the reference (6,5,1) method with its embedding. Here we choose $\mathit{Atol} = 10^{-8}$.

The step size and error plots in \cref{fig:adaptive_pde_5th_order_stepsizes} and the spatial error shapes and scaled errors in \cref{fig:adaptive_pde_5th_order_error_shapes} exhibit the same qualitative behavior as the corresponding results for the fourth order methods (\cref{fig:adaptive_pde_4th_order_stepsizes,fig:adaptive_pde_4th_order_error_shapes}). However, now the impact of order reduction is strikingly more severe: order reduction increases the number of steps by an order of magnitude (in addition to local errors not satisfying the prescribed tolerance); and the boundary layer generates a scaled max error that is 4 orders of magnitude larger than in the absence of order reduction.

\begin{figure}[htbp]
    \centering
	\begin{minipage}[b]{.47\textwidth}
		\includegraphics[width=\textwidth]{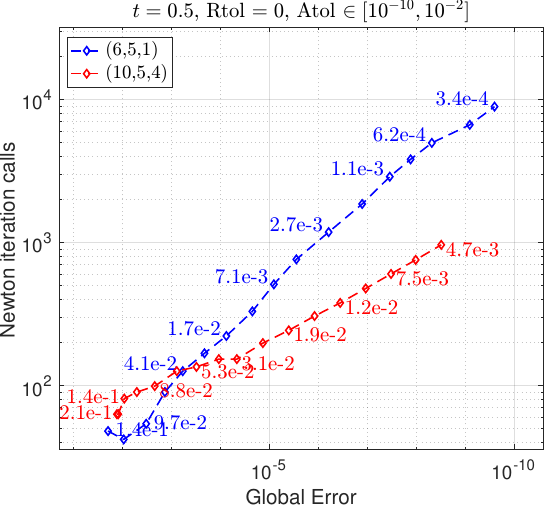}
	\end{minipage}
    \hspace{1em}
	\begin{minipage}[b]{.47\textwidth}
	\includegraphics[width=\textwidth]{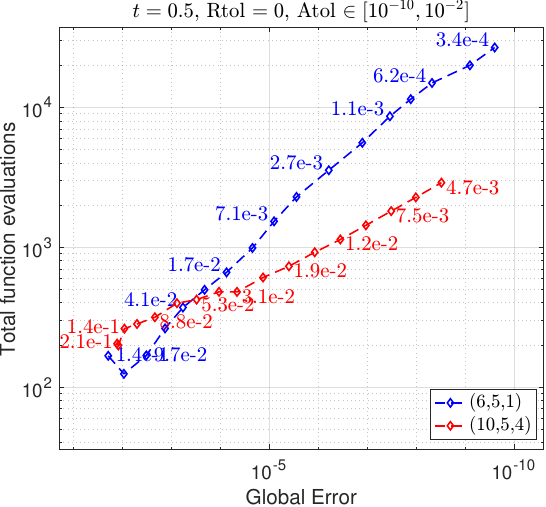}
	\end{minipage}
    \caption{Work-precision diagram for the $(6,5,1)$ and (10,5,4) methods. The plotting style, as well as the key findings, are analogous to those in \cref{Fig:SemiLinAdvRe_InflowBCs_DIRK843_WorkPrecision}. In particular, all points except for the rightmost 3 for the (6,5,1) method are in the stiff regime, and a significant separation in efficiency arises between the two methods.}
	\label{Fig:SemiLinAdvRe_InflowBCs_DIRK1054_WorkPrecision}
\end{figure}

Because additionally the (10,5,4) method incurs slightly fewer step sizes than the (6,5,1) method in the absence of order reduction (periodic boundary conditions), the work-precision graphs in \cref{Fig:SemiLinAdvRe_InflowBCs_DIRK1054_WorkPrecision} identify the high semilinear order method as clearly favorable, except for such loose tolerances that only a dozen step sizes are incurred. This indicates that the additional stages required to satisfy the stiff order conditions more than pay off for problems in which order reduction effects may arise.

\section{Conclusion}
\label{sec:conclusion}
In this work, we have presented the first DIRK methods that attain fourth and fifth order convergence without order reduction for an important class of stiff semilinear ODEs.
To derive these new schemes, we leveraged the order condition theory of \cite{roberts2026runge}, reformulated the conditions into a simpler matrix form, then used a combination of symbolic and numerical optimization techniques to compute coefficients.
For practitioners, we recommend the ESDIRK-(8,4,3) and ESDIRK-(10,5,4) schemes due to their L-stability, optimized principal error, and quality embeddings.
The other two methods presented in this paper, EDIRK-(7,4,4) and EDIRK-(19,5,4), are less optimized and thus are primarily of theoretical interest. Nevertheless, they highlight that methods with stage order one can achieve semilinear order four.

Through extensive numerical experiments, we have shown that our new schemes attain their full and theoretically predicted convergence orders for semilinear problems and even manage to partially mitigate order reduction on nonlinear problems that fall outside the theory.
In contrast, stiffness can degrade the convergence and efficiency of many existing methods from the literature which fail to satisfy the stiff semilinear order conditions.
The effect of order reduction on adaptive step size control is important but scarcely explored.
In this work, we found that methods with low semilinear order can underestimate the local truncation error, overestimate the optimal step size, and exceed user-specified tolerances.

One important focus of future work can be the establishment of $N$-independent (spatial mesh-independent) upper bounds on the error for schemes satisfying semilinear order conditions. When \cref{thm:MainLTE} is applied to a family of ODEs with different $N$, such as a sequence of spatial discretizations of a PDE, the bounds $M$, $L$, and hence the error constant $\overline{D}$ in \cref{thm:MainLTE} may exhibit a dependence on $N$ (primarily because the error bounds are stated in a discrete $L^2$ norm). However, the numerical PDE tests in this work do not indicate a dependence of the overall error on the spatial mesh size.

The implicit Runge--Kutta framework considered in this work generally requires nonlinear solves. Therefore, another important direction of study is to extend the presented framework to other numerical methods that may require less demanding solves, such as Rosenbrock methods or implicit-explicit Runge--Kutta methods.




\section*{Acknowledgements}
This work was performed under the auspices of the U.S.\ Department of Energy by Lawrence Livermore National Laboratory under Contract DE-AC52-07NA27344. LLNL-JRNL-2024060. This material is based upon work supported by the National Science Foundation under Grant No.\ DMS--2309728 (Seibold) and DMS--2309727 (Shirokoff). Any opinions, findings, and conclusions or recommendations expressed in this material are those of the authors and do not necessarily reflect the views of the National Science Foundation.


\appendix

\begin{landscape}
 \section{Table of method properties}

    \begin{table}[ht!]
        \centering
        \small
        \def\arraystretch{1.5}
        \begin{tabular}{r|cccccccccc}
            Method & Source & $A^{(p+1)}$ & $B^{(p+1)}$ & $C^{(p+1)}$ & $D$ & $E^{(p+1)}$ & Stability & $R(\infty)$ & \shortstack{Embedded \\ Stability} & $\widehat{R}(\infty)$ \\
            \hline \hline
            \text{SDIRK-(5,4,1)} & \cite[SDIRK4M]{kennedy2016diagonally} & $2.01 \times 10^{-3}$ & $2.29$ & $1.27$ & $3.03$ & $1.33$ & L & $0$ & A & $-0.5$ \\ \hline
            
            \textbf{ESDIRK-(8,4,3)} & \cref{subsec:symbolic_derivation} & $3.06 \times 10^{-3}$ & $1.50$ & $1.41$ & $1$ & $0.13$ & L & $0$ & L & $0$ \\ \hline
            
            \textbf{EDIRK-(7,4,4)} & \cref{subsec:numeric_derivation} & $1.12 \times 10^{-1}$ & --- & --- & $9.10$ & --- & A & $0.99$ & --- & --- \\ \hline \hline
    
            \text{SDIRK-(6,5,1)} & \cite[Table 2.3]{ismail1998embedded} & $3.97 \times 10^{-3}$ & $1.91$ & $1.61$ & $2.04$ & $1.11$ & A & $-0.40$ & A & $0.54$ \\ \hline
            
            \textbf{ESDIRK-(10,5,4)} & \cref{subsec:symbolic_derivation} & $4.64 \times 10^{-3}$ & $2.03$ & $2.31$ & $1.98$ & $1.10$ & L & $0$ & A & $0.21$ \\ \hline
            
            \textbf{EDIRK-(19,5,4)} & \cref{subsec:numeric_derivation} & $1.12 \times 10^{-2}$ & --- & --- & $9.10$ & --- & A(\ang{89.8}) & $0.98$ & --- & --- \\ \hline
        \end{tabular}
        \caption{Properties of methods presented in this paper (denoted with bold) and comparable methods from the literature.
        The methods column uses the naming convention \textit{TYPE}-($s$, $p$, $\psl$), where $s$ is the number of stages, $p$ is the classical order, and $\psl$ is the semilinear order.
        Next, $A^{(p+1)}$ represents the principal error norm (the 2-norm of the classical order $p+1$ residuals).
        The quality of the embedding is determined by $B^{(p+1)}$, $C^{(p+1)}$, and $E^{(p+1)}$. The maximum coefficient in magnitude is given by $D$.
        Finally, we provide properties of the linear stability region, both for the primary and embedded method.}
        \label{tab:method_properties}
    \end{table}
\end{landscape}

\section{New method coefficients}
In this appendix, we provide coefficients, accurate to 34 digits, for three new methods.
All schemes are diagonally implicit and stiffly accurate so $a_{i,j} = 0$ for $1 \leq i < j \leq s$, and $b_i = a_{s, i}$ for $i = 1, \dots, s$.
The nodes $c_i$ are given by $c_i = \sum_{j=1}^{i} a_{i,j}$.


\subsection{Coefficients for ESDIRK-(8,4,3)}
\label{app:ESDIRK-(843)}
The eight-stage ESDIRK-(8,4,3) method is fourth-order accurate with a semilinear order of three.
The coefficients are as follows:
\begin{equation*}
    \begin{butchertableau}{cl|cl|cl}
        a_{1,1} & 0 & a_{2,1} & \frac{31}{125} & a_{2,2} & \frac{31}{125} \\
        a_{3,1} & \frac{3781}{15500} & a_{3,2} & \frac{63}{124} & a_{3,3} & \frac{31}{125} \\
        a_{4,1} & -\frac{3882222210210885}{75584786387396543} & a_{4,2} & -\frac{3882222210210885}{75584786387396543} & a_{4,3} & 0 \\
        a_{4,4} & \frac{31}{125} & a_{5,1} & \frac{34038088698073943}{122803704925069405} & a_{5,2} & \frac{33514318812866834}{119213963756997001} \\
        a_{5,3} & \frac{224887786579749}{60595115130919582} & a_{5,4} & \frac{4253927007933940}{115874777755193681} & a_{5,5} & \frac{31}{125} \\
        a_{6,1} & \frac{137658149652207956}{209706981851726679} & a_{6,2} & \frac{126492513018975825}{128664872604952432} & a_{6,3} & \frac{11417678526293581}{37223122310090063} \\
        a_{6,4} & -\frac{103353126478507816}{135174297737314759} & a_{6,5} & -\frac{5}{7} & a_{6,6} & \frac{31}{125} \\
        a_{7,1} & \frac{117786594983325079}{151727549241844762} & a_{7,2} & \frac{18526475144695067}{21268081176298953} & a_{7,3} & \frac{7206985701555927}{80976168939093068} \\
        a_{7,4} & -\frac{94099054066115167}{95522373062038575} & a_{7,5} & -\frac{26}{27} & a_{7,6} & \frac{190069087194766309}{197235632620571833} \\
        a_{7,7} & \frac{31}{125} & a_{8,1} & -\frac{29602757552094}{1071399797354437} & a_{8,2} & \frac{548139805377293}{112676962442277364} \\
        a_{8,3} & \frac{424515922983497}{13913811815644881} & a_{8,4} & \frac{67814305287223931}{162780150685834614} & a_{8,5} & -\frac{41854401642916128}{116143966895455495} \\
        a_{8,6} & \frac{74958030483037457}{95092867575812394} & a_{8,7} & -\frac{21188129}{211373000} & a_{8,8} & \frac{31}{125}
    \end{butchertableau}
\end{equation*}
The embedded weights are defined as $\widehat{b}_i = a_{7,i}$ for $i = 1, \dots, 8$.

\subsection{Coefficients for EDIRK-(7,4,4)}
\label{app:EDIRK-(744)}
The seven-stage EDIRK-(7,4,4) method is fourth-order accurate with a semilinear order of four.
The coefficients are as follows:
\begin{equation*}
    \begin{butchertableau}{cl|cl|cl}
        a_{1,1} & 0 & a_{2,1} & \frac{66719178356146069}{104971894986575178} & a_{2,2} & \frac{66719178356146069}{104971894986575178} \\
        a_{3,1} & \frac{11574878994758291}{117719113355115783} & a_{3,2} & -\frac{1858197540898696}{70529361366069153} & a_{3,3} & \frac{11617133062216757}{43245479316548780} \\
        a_{4,1} & \frac{312078294212599530}{40823424700776821} & a_{4,2} & \frac{155312269009595199}{86710391005988198} & a_{4,3} & -\frac{743789150637775609}{113352218631221311} \\
        a_{4,4} & \frac{98271968880200657}{179019545289054999} & a_{5,1} & \frac{1246868775297421168}{137070970121741807} & a_{5,2} & \frac{114921713922407255}{52367417556902641} \\
        a_{5,3} & -\frac{205947502305419261}{24454220481972685} & a_{5,4} & \frac{18936671640200689}{104159855867653343} & a_{5,5} & \frac{47397311839212708}{127463680130367391} \\
        a_{6,1} & -\frac{151740509096074388}{196613682401464609} & a_{6,2} & \frac{254369392774793867}{44087509892864172} & a_{6,3} & -\frac{73864359103986538}{65744654972066205} \\
        a_{6,4} & -\frac{37706375961306427}{179802732674457709} & a_{6,5} & \frac{7953265906419399}{38933344132172515} & a_{6,6} & \frac{48325866641079469}{46020097947328612} \\
        a_{7,1} & \frac{3312403043354842}{33496693975407517} & a_{7,2} & -\frac{7745264544994559}{74509708869668763} & a_{7,3} & \frac{75463258779378077}{134382831179297809} \\
        a_{7,4} & -\frac{11696764876217691}{132584149662964151} & a_{7,5} & \frac{9114026243344448}{106054923174086269} & a_{7,6} & \frac{55946924902076}{75756035623695139} \\
        a_{7,7} & \frac{34834932759942553}{78271243704016222} & \text{} & \text{} & \text{} & \text{}
    \end{butchertableau}
\end{equation*}
This method is primarily of academic interest and therefore does not include an embedding.

\subsection{Coefficients for ESDIRK-(10,5,4)}
\label{app:ESDIRK-(1054)}
The ten-stage ESDIRK-(10,5,4) method is fifth-order accurate with a semilinear order of four.
The coefficients are as follows:
\begin{equation*}
    \begin{butchertableau}{cl|cl|cl}
        a_{1,1} & 0 & a_{2,1} & \frac{23704630662296147}{85251944606883963} & a_{2,2} & \frac{23704630662296147}{85251944606883963} \\
        a_{3,1} & -\frac{3828053375722559}{66474452137650668} & a_{3,2} & -\frac{3828053375722559}{66474452137650668} & a_{3,3} & \frac{23704630662296147}{85251944606883963} \\
        a_{4,1} & -\frac{1768624057837184}{97907440837578655} & a_{4,2} & -\frac{1768624057837184}{97907440837578655} & a_{4,3} & \frac{32342625154963567}{63657230586675651} \\
        a_{4,4} & \frac{23704630662296147}{85251944606883963} & a_{5,1} & \frac{38689688244273643}{145529658745107532} & a_{5,2} & \frac{38689688244273643}{145529658745107532} \\
        a_{5,3} & \frac{5165440406558499}{48318409929685292} & a_{5,4} & 0 & a_{5,5} & \frac{23704630662296147}{85251944606883963} \\
        a_{6,1} & -\frac{2048929420167937}{62953617727398324} & a_{6,2} & -\frac{2048929420167937}{62953617727398324} & a_{6,3} & -\frac{5242126029351595}{74777062100226619} \\
        a_{6,4} & 0 & a_{6,5} & 0 & a_{6,6} & \frac{23704630662296147}{85251944606883963} \\
        a_{7,1} & -\frac{14097385432048041}{83960854026807536} & a_{7,2} & -\frac{14097385432048041}{83960854026807536} & a_{7,3} & \frac{102631915790147645}{94805799854610222} \\
        a_{7,4} & -\frac{5088592904909032}{78095945223394903} & a_{7,5} & \frac{4194495314217601}{126041470655949480} & a_{7,6} & -\frac{25019099907264765}{59359248890957054} \\
        a_{7,7} & \frac{23704630662296147}{85251944606883963} & a_{8,1} & -\frac{6666023823723632}{60782333039950069} & a_{8,2} & -\frac{6666023823723632}{60782333039950069} \\
        a_{8,3} & \frac{34950019030688054}{58700988111542175} & a_{8,4} & \frac{16157137791883982}{129136549150431411} & a_{8,5} & -\frac{4633364877709012}{111280515475374397} \\
        a_{8,6} & -\frac{44830020893037844}{115925512623522465} & a_{8,7} & -\frac{10683392218257989}{83044047426145149} & a_{8,8} & \frac{23704630662296147}{85251944606883963} \\
        a_{9,1} & \frac{7974359957524127}{41777369871990865} & a_{9,2} & \frac{7974359957524127}{41777369871990865} & a_{9,3} & \frac{31122288307425661}{48758046711807126} \\
        a_{9,4} & \frac{124912727797607611}{63031353902083464} & a_{9,5} & -\frac{68608793789563332}{113404568873370149} & a_{9,6} & -\frac{95359235367355842}{59441684582698261} \\
        a_{9,7} & -\frac{162051494287025479}{83831556722521602} & a_{9,8} & \frac{148921337658127561}{79960515909413127} & a_{9,9} & \frac{23704630662296147}{85251944606883963} \\
        a_{10,1} & -\frac{10206283873289495}{82173081853978556} & a_{10,2} & -\frac{10206283873289495}{82173081853978556} & a_{10,3} & 0 \\
        a_{10,4} & -\frac{149692166756442484}{122226586801197919} & a_{10,5} & \frac{59118216399459218}{50501318642781983} & a_{10,6} & \frac{239399454367668061}{196562057586860935} \\
        a_{10,7} & \frac{62112483136249447}{41672907966740429} & a_{10,8} & -\frac{117867017378953048}{115349226975194417} & a_{10,9} & -\frac{40768154109170907}{61569993216212962} \\
        a_{10,10} & \frac{23704630662296147}{85251944606883963} & \hat{b}_1 & -\frac{7377933185266438}{48541560297323275} & \hat{b}_2 & -\frac{7377933185266438}{48541560297323275} \\
        \hat{b}_3 & 0 & \hat{b}_4 & \frac{39662348139301097}{87268009515385808} & \hat{b}_5 & \frac{68262268363872686}{135528001467088899} \\
        \hat{b}_6 & \frac{58007573143164457}{132805403176109435} & \hat{b}_7 & -\frac{28930654619832593}{286820031950471742} & \hat{b}_8 & \frac{15187635870586502}{50762060951677207} \\
        \hat{b}_9 & -\frac{247981587118689503}{472717409267306743} & \hat{b}_{10} & \frac{4}{17} & \text{} & \text{}
    \end{butchertableau}
\end{equation*}

\bibliographystyle{elsarticle-num-names} 
\bibliography{main.bib}

\end{document}